\newtheorem{theorem}{Theorem}[section]
\newtheorem{example}[theorem]{Example}
\newtheorem{definition}[theorem]{Definition}
\newtheorem{proposition}[theorem]{Proposition}
\newtheorem{lemma}[theorem]{Lemma}
\newtheorem{corollary}[theorem]{Corollary}
\newtheorem{remark}[theorem]{Remark}
\newcommand*{\circled}[1]{\lower.7ex\hbox{\tikz\draw (0pt, 0pt)%
		circle (.5em) node {\makebox[1em][c]{\small #1}};}}
\newcommand{\qa}{kQ/I}
\newcommand{\lseq}[1]{L(g^{d(#1)-1}(#1))\cdots L(g(#1))L(#1)}
\newcommand{\llseq}[2]{L(g^{#2}(#1))\cdots L(g(#1))L(#1)}
\newcommand{\wwedge}{{^\wedge}}
\newcommand{\fbc}{f-BC~}
\newcommand{\fsbc}{$f_s$-BC~}
\newcommand{\fsbca}{$f_s$-BCA~}
\newcommand{\soc}{\mathrm{soc}}
\newcommand{\image}{\mathrm{Im}}
\newcommand{\sspan}{\mathrm{Span}}
\begin{document}
	
	\title{\bf \Large Trivial extensions of monomial algebras are symmetric fractional Brauer configuration algebras}
	\author{Yuming Liu$^a$ and Bohan Xing$^{a,*}$}
	\maketitle
	
	\renewcommand{\thefootnote}{\alph{footnote}}
	\setcounter{footnote}{-1} \footnote{\it{Mathematics Subject
			Classification(2020)}: 16G20, 16G10, 16S99.}
	\renewcommand{\thefootnote}{\alph{footnote}}
	\setcounter{footnote}{-1} \footnote{\it{Keywords}: Symmetric fractional Brauer configuration (algebra) of type S; Monomial algebra; Trivial extension; Admissible cut.}
	\setcounter{footnote}{-1} \footnote{$^a$School of Mathematical Sciences, Laboratory of Mathematics and Complex Systems, Beijing Normal University,
		Beijing 100875,  P. R. China. E-mail: ymliu@bnu.edu.cn (Y.M. Liu); bhxing@mail.bnu.edu.cn (B.H. Xing).}
	\setcounter{footnote}{-1} \footnote{$^*$Corresponding author.}
	
	{\noindent\small{\bf Abstract:} By providing equivalent definitions of fractional Brauer configuration algebras in certain special cases, we associate to each monomial algebra some combinatorial data called a fractional Brauer configuration, from which we construct a corresponding fractional Brauer configuration algebra. We show that this algebra is isomorphic to the trivial extension of the given monomial algebra. Furthermore, we establish a one-to-one correspondence between the isomorphism classes of monomial algebras and the equivalence classes of pairs consisting of a symmetric fractional Brauer configuration algebra of type~S with a free fractional-degree function and an admissible cut on it.}

	\section{Introduction}	

In the representation theory of algebras, monomial algebras form an important class of finite-dimensional quiver algebras. They admit a natural grading and are often used as testing grounds for various theories and conjectures. For example, the minimal two-sided projective resolution of a given monomial algebra can be described explicitly~\cite{Bar}, and the finitistic dimension conjecture is known to hold for monomial algebras~\cite{GKK}. Moreover, monomial algebras encompass algebras of diverse global dimensions and representation types, making them a rich and versatile class from the viewpoint of representation theory.

Symmetric algebras constitute another fundamental class of finite-dimensional algebras. Typical examples include group algebras of finite groups and trivial extensions of finite-dimensional algebras. The trivial extension construction appears frequently in representation theory, and various connections between trivial extension algebras and tilting theory have been established (see \cite{HW,TW,Ric1}). More recently, a characterization of trivial extension algebras in terms of quivers with relations was obtained in~\cite[Theorem~1.1]{FSTTV}.

In \cite{Sch}, Schroll shows that trivial extensions of gentle algebras, a special subclass of monomial algebras, are Brauer graph algebras, a class of symmetric special biserial algebras, and establishes a correspondence between gentle algebras and Brauer graph algebras with multiplicity one via admissible cuts. Green and Schroll~\cite{GS} later generalized these results, proving that trivial extensions of almost gentle algebras, which are monomial and properly contain gentle algebras, are Brauer configuration algebras, and obtained a similar correspondence using admissible cuts. More recently, an analogous correspondence between trivial extensions of skew-gentle algebras and skew-Brauer algebras, which also contain gentle algebras as a subclass, has been established in the non-monomial case, by Elsener, Guazzelli, and Valdivieso \cite{EGV} and by Soto \cite{Soto} independently. In our forthcoming work \cite{LLX}, we will show that their skew-Brauer graph algebras are in fact f-BCAs (see the next paragraph for an explanation of this notion). This can also be hinted at by our Example \ref{ex:skew-BGA}.

Very recently, Li and Liu gave a further generalization of Brauer configuration algebras in \cite{LL}. These algebras are defined using certain combinatorial data called fractional Brauer configurations (abbr. f-BCs) and are referred to as fractional Brauer configuration algebras (abbr. f-BCAs). In general, such algebras need not be symmetric, multiserial, or even finite-dimensional. However, if the underlying f-BC is of type~S, finite, and symmetric (see Section~2.2 for the precise definitions of these notions), then the corresponding f-BCA is symmetric and finite-dimensional. We call such an algebra a symmetric Brauer configuration algebra of type~S (abbr. symmetric $f_s$-BCA).

In this paper, we generalize the results of \cite{Sch} and \cite{GS}. For any monomial algebra, we construct a fractional Brauer configuration of type~S (abbr. $f_s$-BC) and define the corresponding fractional Brauer configuration algebra ($f_s$-BCA). We show that this algebra is isomorphic to the trivial extension of the given monomial algebra. Furthermore, we establish a one-to-one correspondence between the set of isomorphism classes of monomial algebras and the set of equivalence classes of pairs consisting of a symmetric $f_s$-BCA with a free fractional-degree function and an admissible cut on it.

\medskip
\textbf{Outline.} \;In Section~2, we recall the definitions and basic properties of f-BCAs and $f_s$-BCAs. We also provide equivalent conditions for the defining relations of f-BCAs in Lemma~\ref{relations}, and give an equivalent definition of $f_s$-BCAs in Lemma~\ref{f7'} in the case where the corresponding $f_s$-BC is symmetric. In Section~3, for each monomial algebra~$A$, we construct a fractional Brauer configuration $E_A$ of type~S (Proposition~\ref{EA is fsbc}). Section~4 shows that the $f_s$-BCA associated with $E_A$ is isomorphic to the trivial extension of the given monomial algebra~$A$ (Theorem~\ref{Trivial extension of mono}). In Section~5, we introduce admissible cuts of symmetric $f_s$-BCAs and establish a one-to-one correspondence between the isomorphism classes of monomial algebras and the equivalence classes of pairs consisting of a symmetric $f_s$-BCA with a free fractional degree function and an admissible cut on it (Corollary~\ref{1-1 cor}).

	\section{Basic knowledge about $f_s$-BCAs}

	\subsection{Fractional Brauer configuration algebras}
	
	We recall some definitions of fractional Brauer configuration algebras in \cite{LL}, which will play an important role in our following discussions. Furthermore, we give some equivalent conditions of the relations in fractional Brauer configuration algebras.
	
	\begin{definition}\textnormal{(\cite[Definition 3.3]{LL})}\label{f-BC}
		A fractional Brauer configuration (abbr. f-BC) is a quadruple $E=(E,P,L,d)$, where $E$ is a $G$-set with $G=\langle g\rangle\cong(\mathbb{Z},+)$, an infinite cyclic group, $P$ and $L$ are two partitions of $E$, and $d:E\rightarrow\mathbb{Z}_+$ is a function, such that the following conditions hold.
		\begin{enumerate}[(f1)]
			\item $L(e)\subseteq P(e)$ and $P(e)$ is a finite set for each $e\in E$.
			
			\item If $L(e_1)=L(e_2)$, then $P(g(e_1))=P(g(e_2))$.
			
			\item If $e_1,e_2$ belong to same $\langle g\rangle$-orbit, then $d(e_1)=d(e_2)$.
			
			\item $P(e_1)=P(e_2)$ if and only if $P(g^{d(e_1)}(e_1))=P(g^{d(e_2)}(e_2))$.
			
			\item $L(e_1)=L(e_2)$ if and only if $L(g^{d(e_1)}(e_1))=L(g^{d(e_2)}(e_2))$.
			
			\item The formal sequence $\lseq{e}$ is not a proper subsequence of the formal sequence $\lseq{h}$ for all $e,h\in E$.
		\end{enumerate}
	\end{definition}
	
	For convenience, we recall the following notations in \cite[Remark 3.4]{LL}. The elements in $E$ are called angles. The $\langle g\rangle$-orbits of $E$ are called vertices. The classes $P(e)$ of the partition $P$ are called polygons. The partition $L$ is said to be trivial if $L(e)=e$ for each $e\in E$. It will be clear later when we define the associated quiver of $E$ that the partitions $P$ and $L$ correspond to vertices and arrows respectively.
	
	The function $d:E\rightarrow\mathbb{Z}_+$ is called degree function. Condition (f3) means that the degree function can be defined on vertices. Let $v$ be a vertex such that $v$ is a finite set, define the fractional-degree (abbr. f-degree) $d_f(v)$ of a vertex $v$ to be the rational number $d(v)/|v|$. An f-BC $E$ is called f-degree-free (resp. the f-degree function of $E$ is free) if each $d(e)= |v_e|$ (respectively, $d_f(v_e)=1$) where $v_e$ is the $\langle g\rangle$-orbit containing the angle $e$.
	
	Denote by $\sigma$ the map $E\rightarrow E$, $e\mapsto g^{d(e)}(e)$, which is called the Nakayama automorphism of $E$. Moreover, we call an f-BC $E$ {\it symmetric} if its Nakayama automorphism is identity. This equals to say that $E$ has integral fractional-degree on each vertex.
	
	Below we provide some examples to help the reader understand these concepts.

	\begin{example}\label{ex:1-LBGA}
		Let $E=\{1,1',2,3,4,4'\}$. Define the group action on $E$ by $g(1)=2$, $g(2)=4$, $g(4)=1$, $g(1')=3$, $g(3)=4'$, $g(4')=1'$. Define $P(1)=\{1,1'\}$, $P(2)=\{2\}$, $P(3)=\{3\}$, $P(4)=\{4,4'\}$, $L(4)=\{4,4''\}$ and $L(e)=\{e\}$ for other $e\in E$. The f-degree function $d_f$ of $E$ is free. It can be verified that $E$ is an f-BC. 
		
		Since each polygon here contains at most two elements, we can represent each angle by a half-edge and each $2$-gon by an edge. The above combinatorial description can be illustrated by the following graph, where the orientation around each vertex corresponds to the group action, which we take to be clockwise. In particular, following the convention in~\cite{X}, we use a dotted line to represent an edge whose two half-edges lie in the same $L$-partition, indicating that this edge is labeled in the graph.

		\begin{center}
\tikzset{every picture/.style={line width=0.75pt}} 

\begin{tikzpicture}[x=0.75pt,y=0.75pt,yscale=-1,xscale=1]

\draw  [fill={rgb, 255:red, 0; green, 0; blue, 0 }  ,fill opacity=1 ] (100,115) .. controls (100,112.24) and (102.24,110) .. (105,110) .. controls (107.76,110) and (110,112.24) .. (110,115) .. controls (110,117.76) and (107.76,120) .. (105,120) .. controls (102.24,120) and (100,117.76) .. (100,115) -- cycle ;
\draw  [fill={rgb, 255:red, 0; green, 0; blue, 0 }  ,fill opacity=1 ] (200,115) .. controls (200,112.24) and (202.24,110) .. (205,110) .. controls (207.76,110) and (210,112.24) .. (210,115) .. controls (210,117.76) and (207.76,120) .. (205,120) .. controls (202.24,120) and (200,117.76) .. (200,115) -- cycle ;
\draw [line width=1.5]    (105,115) -- (205,115) ;
\draw [line width=1.5]  [dash pattern={on 5.63pt off 4.5pt}]  (105,115) .. controls (198,70.5) and (111,154.5) .. (205,115) ;
\draw [line width=1.5]    (105,115) -- (55,115) ;
\draw [line width=1.5]    (255,115) -- (205,115) ;

\draw (62,100) node [anchor=north west][inner sep=0.75pt]   [align=left] {$2$};
\draw (241,100) node [anchor=north west][inner sep=0.75pt]   [align=left] {$3$};
\draw (107,85) node [anchor=north west][inner sep=0.75pt]   [align=left] {$4$};
\draw (113,123) node [anchor=north west][inner sep=0.75pt]   [align=left] {$1$};
\draw (198,133) node [anchor=north west][inner sep=0.75pt]   [align=left] {$4'$};
\draw (184,93) node [anchor=north west][inner sep=0.75pt]   [align=left] {$1'$};
\end{tikzpicture}
		\end{center}
	\end{example}

	\begin{example}\label{ex:2-kx/x^3}
		Let $E=\{1,1',1''\}$. Define the group action on $E$ by $g(1)=1'$, $g(1')=1''$, $g(1'')=1$. Define $P(1)=\{1,1',1''\}$, $L(1)=\{1,1'\}$ and $L(1'')=\{1''\}$. The f-degree function $d_f$ of $E$ is free. It can be verified that $E$ is an f-BC.
		
		The above combinatorial data can be represented by the following diagram, in the same manner as in~\cite{GS2}, where the orientation around each vertex corresponds to the group action, which we take to be clockwise. In particular, the small arcs at the angles $1$ and $1'$ indicate that these two angles belong to the same class of the partition $L$, following the same convention as in~\cite[Example 7.16]{LL}.

		\begin{center}

\tikzset{every picture/.style={line width=0.75pt}} 

\begin{tikzpicture}[x=0.75pt,y=0.75pt,yscale=-1,xscale=1]

\draw  [fill={rgb, 255:red, 236; green, 236; blue, 236 }  ,fill opacity=1 ] (175,86) .. controls (175,44.58) and (208.58,11) .. (250,11) .. controls (291.42,11) and (325,44.58) .. (325,86) .. controls (325,127.42) and (291.42,161) .. (250,161) .. controls (208.58,161) and (175,127.42) .. (175,86) -- cycle ;
\draw [fill={rgb, 255:red, 255; green, 255; blue, 255 }  ,fill opacity=1 ]   (250,156) .. controls (249,10.5) and (141,98.5) .. (250,161) ;
\draw [fill={rgb, 255:red, 255; green, 255; blue, 255 }  ,fill opacity=1 ]   (250,161) .. controls (363,100.5) and (251,11.5) .. (250,156) ;
\draw  [fill={rgb, 255:red, 0; green, 0; blue, 0 }  ,fill opacity=1 ] (245,161) .. controls (245,158.24) and (247.24,156) .. (250,156) .. controls (252.76,156) and (255,158.24) .. (255,161) .. controls (255,163.76) and (252.76,166) .. (250,166) .. controls (247.24,166) and (245,163.76) .. (245,161) -- cycle ;
\draw  [draw opacity=0][line width=1.5]  (220.62,154.91) .. controls (221.5,150.63) and (223.3,146.68) .. (225.79,143.28) -- (250,161) -- cycle ; \draw  [line width=1.5]  (220.62,154.91) .. controls (221.5,150.63) and (223.3,146.68) .. (225.79,143.28) ;  
\draw  [draw opacity=0][line width=1.5]  (246.51,121.15) .. controls (247.66,121.05) and (248.83,121) .. (250,121) .. controls (251.17,121) and (252.34,121.05) .. (253.49,121.15) -- (250,161) -- cycle ; \draw  [line width=1.5]  (246.51,121.15) .. controls (247.66,121.05) and (248.83,121) .. (250,121) .. controls (251.17,121) and (252.34,121.05) .. (253.49,121.15) ;  

\draw (190,111) node [anchor=north west][inner sep=0.75pt]   [align=left] {$1$};
\draw (245,83) node [anchor=north west][inner sep=0.75pt]   [align=left] {$1'$};
\draw (301,111) node [anchor=north west][inner sep=0.75pt]   [align=left] {$1''$};

\end{tikzpicture}

		\end{center}
	\end{example}

	\begin{example}\label{ex:3-fBCA-without-admissible-cut}
		We present an example that is slightly more complicated than Example \ref{ex:2-kx/x^3}. Let $E=\{1,1',1'',1'''\}$. Define the group action on $E$ by $g(1)=1'$, $g(1')=1''$, $g(1'')=1'''$ and $g(1''')=1$. Define $P(1)=\{1,1',1'',1'''\}$, $L(1)=\{1,1'\}$ and $L(1'')=\{1'',1'''\}$. The f-degree function $d_f$ of $E$ is free. It can be verified that $E$ is an f-BC.
		
		The above combinatorial data can be represented by the following diagram, where the orientation around each vertex corresponds to the group action, which we take to be clockwise. In particular, the small arcs (resp. double-arcs) at the angles $1$ and $1'$ (resp. $1''$ and $1'''$) indicate that these two angles belong to the same class of the partition $L$.

		\begin{center}

\tikzset{every picture/.style={line width=0.75pt}} 

\begin{tikzpicture}[x=0.75pt,y=0.75pt,yscale=-1,xscale=1]

\draw  [fill={rgb, 255:red, 236; green, 236; blue, 236 }  ,fill opacity=1 ] (175,86) .. controls (175,44.58) and (208.58,11) .. (250,11) .. controls (291.42,11) and (325,44.58) .. (325,86) .. controls (325,127.42) and (291.42,161) .. (250,161) .. controls (208.58,161) and (175,127.42) .. (175,86) -- cycle ;
\draw [fill={rgb, 255:red, 255; green, 255; blue, 255 }  ,fill opacity=1 ]   (250,156) .. controls (226,93.5) and (163,120.5) .. (250,161) ;
\draw [fill={rgb, 255:red, 255; green, 255; blue, 255 }  ,fill opacity=1 ]   (250,161) .. controls (327,112.5) and (279,93.5) .. (250,156) ;
\draw [fill={rgb, 255:red, 255; green, 255; blue, 255 }  ,fill opacity=1 ]   (250,156) .. controls (283,58.5) and (225,57.5) .. (250,161) ;
\draw  [fill={rgb, 255:red, 0; green, 0; blue, 0 }  ,fill opacity=1 ] (245,161) .. controls (245,158.24) and (247.24,156) .. (250,156) .. controls (252.76,156) and (255,158.24) .. (255,161) .. controls (255,163.76) and (252.76,166) .. (250,166) .. controls (247.24,166) and (245,163.76) .. (245,161) -- cycle ;
\draw  [draw opacity=0][line width=1.5]  (220.76,154.25) .. controls (221.44,151.31) and (222.55,148.54) .. (224.01,146) -- (250,161) -- cycle ; \draw  [line width=1.5]  (220.76,154.25) .. controls (221.44,151.31) and (222.55,148.54) .. (224.01,146) ;  
\draw  [draw opacity=0][line width=1.5]  (236.85,134.03) .. controls (239.17,132.89) and (241.66,132.05) .. (244.27,131.55) -- (250,161) -- cycle ; \draw  [line width=1.5]  (236.85,134.03) .. controls (239.17,132.89) and (241.66,132.05) .. (244.27,131.55) ;  
\draw  [draw opacity=0][line width=1.5]  (256.24,131.65) .. controls (258.49,132.13) and (260.65,132.85) .. (262.68,133.8) -- (250,161) -- cycle ; \draw  [line width=1.5]  (256.24,131.65) .. controls (258.49,132.13) and (260.65,132.85) .. (262.68,133.8) ;  
\draw  [draw opacity=0][line width=1.5]  (273.96,142.94) .. controls (276.56,146.38) and (278.43,150.4) .. (279.35,154.76) -- (250,161) -- cycle ; \draw  [line width=1.5]  (273.96,142.94) .. controls (276.56,146.38) and (278.43,150.4) .. (279.35,154.76) ;  
\draw  [draw opacity=0][line width=1.5]  (257.88,126.89) .. controls (260.9,127.58) and (263.77,128.67) .. (266.43,130.09) -- (250,161) -- cycle ; \draw  [line width=1.5]  (257.88,126.89) .. controls (260.9,127.58) and (263.77,128.67) .. (266.43,130.09) ;  
\draw  [draw opacity=0][line width=1.5]  (277.75,139.67) .. controls (280.83,143.67) and (283.06,148.35) .. (284.18,153.45) -- (250,161) -- cycle ; \draw  [line width=1.5]  (277.75,139.67) .. controls (280.83,143.67) and (283.06,148.35) .. (284.18,153.45) ;  

\draw (190,116) node [anchor=north west][inner sep=0.75pt]   [align=left] {$1$};
\draw (226,99) node [anchor=north west][inner sep=0.75pt]   [align=left] {$1'$};
\draw (266,99) node [anchor=north west][inner sep=0.75pt]   [align=left] {$1''$};
\draw (297,114) node [anchor=north west][inner sep=0.75pt]   [align=left] {$1'''$};

\end{tikzpicture}

		\end{center}
	\end{example}

	In fact, the above combinatorial definition of an f-BC provides a way to define a quiver with relations, which in turn helps us study more (finite-dimensional) algebras. We will better understand the concepts in Definition~\ref{f-BC} through the quivers associated with f-BCs.
	
	We first recall some basic notations about quiver algebras. Let $k$ be a field and let $A=\qa$ be a quiver algebra, which means that $A$ is a quotient algebra of the path algebra $kQ$ defined on a finite quiver $Q$ by modulo an ideal $I$ in $kQ$. (Note that in the present paper all the involved ideals $I$ are lied in between the ideal $kQ_{\geq 1}$ generated by the arrows in $Q$ and some power of $kQ_{\geq 1}$.)  We denote by $s(p)$ the source vertex of a path $p$ and by $t(p)$ its terminus vertex. We will write paths from right to left, for example, $p=\alpha_{n}\alpha_{n-1}\cdots\alpha_{1}$ is a path with starting arrow $\alpha_{1}$ and ending arrow $\alpha_{n}$. 
	The length of a path $p$ will be denoted by $l(p)$. For convenience, for $p,q\in Q$, we denote $p\mid q$  if $p$ is a subpath of $q$.
	By abuse of notation we sometimes view an element in $kQ$ as an element in $\qa$ if no confusion can arise. 
	
	We now define, for each f-BC, a quiver with relations associated to it.
		
	\begin{definition}\textnormal{(\cite[Definition 4.1]{LL})}
	For an f-BC $E=(E,P,L,d)$, the quiver $Q_E=(Q_0,Q_1)$ associated with $E$ defined as follow: $Q_0=\{P(e)\mid e\in E\}$ and $$Q_1=\{L(e)\mid \text{$e\in E$, $s(L(e))=P(e)$ and $t(L(e))=P(g(e))$} \}.$$
		\end{definition}
	Therefore, the sequence $\lseq{e}$ we considered in condition $(f6)$ in Definition \ref{f-BC} is actually a path in the quiver $Q_E$. We call such a path of the form $\lseq{e}$ the {\it special path}  starting at $e\in E$. We also note that for a path $p=L(e_n)\cdots L(e_2)L(e_1)$ of $Q_E$, there exists some $e\in E$ such that $p=L(g^{n-1}\cdot e)\cdots L(g\cdot e)L(e)$ if and only if $\bigcap_{i=1}^n g^{n-i}\cdot L(e_i)\neq\emptyset$ (see \cite[Lemma 4.3]{LL}).
	
	Moreover, we can define the ideal $I_E$ which is generated by the following three types of relations (see \cite[Definition 4.4]{LL}):
	\begin{enumerate}[(R1)]
		\item $\llseq{e}{d(e)-1-k}-\llseq{h}{d(h)-1-k}$, if $P(e)=P(h)$, $k\geq 0$ and $L(g^{d(e)-i}(e))=L(g^{d(h)-i}(h))$ for $1\leq i\leq k$.
		
		\item $L(e_n)\cdots L(e_2)L(e_1)$, if $P(g(e_i))=P(e_{i+1})$ for each $1\leq i\leq n-1$ and $\bigcap_{i=1}^ng^{n-i}(L(e_i))=\emptyset$.
		
		\item $\llseq{e}{n-1}$ for $n>d(e)$.
	\end{enumerate}
	
	Call the three kinds of relations in $I_E$ type~1, type~2, and type~3, respectively.  The quiver algebra $A_E: = kQ_E / I_E$ is then called the fractional Brauer configuration algebra (abbr. f-BCA) associated with the f-BC $E$. Note that in~\cite[Definition 5.1]{LL}, the definition of the corresponding algebra is defined as the opposite algebra $kQ_E^{op} / I_E^{op}$, however, since it is more direct to connect $E$ with the quiver $Q_E$, we define $A_E$ as the present form.

	The above relations are expressed using the combinatorial notions appearing in the definition of f-BCs. For clarity, we restate their meanings below in the language of the path algebra $kQ_E$.	Consider the following types of relations in $kQ_E$.

\begin{enumerate}[(R1')]
	
	\item $$\llseq{e}{d(e)-1-k} = \llseq{h}{d(h)-1-k},$$ where $P(e) = P(h)$ and $k \ge 0$, provided that there exists a nonzero path $p$ in $Q_E$ such that 
	\[
	p\llseq{e}{d(e)-1-k} \quad \text{and} \quad p\llseq{h}{d(h)-1-k}
	\]
	are special paths starting at $P(e)$.
	
	\item $$L(e_n)\cdots L(e_2)L(e_1) = 0,$$ where the path $L(e_n)\cdots L(e_2)L(e_1)$ is not a subpath of any special path of an angle $e_1 \in E$, but every proper subpath of it is a subpath of some special path in $kQ_E$.
	
\end{enumerate}

	By Definition \ref{f-BC}, it is easy to see that the sequences in the \fbc $E$ fitting the condition (R1) is equivalent to the paths in $Q_E$ fitting the condition (R1'). Moreover, we give the following lemma to show two descriptions of the relations in the ideal in $kQ_E$ are equivalent.
	
	\begin{lemma}\label{relations}
		The ideal generated by (R2') in $kQ_E$ is equal to the ideal generated by (R2) and (R3).
	\end{lemma}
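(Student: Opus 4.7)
The plan is to prove both inclusions separately, after establishing one structural lemma: a nonzero path $p = L(e_n)\cdots L(e_1)$ in $kQ_E$ is a subpath of some special path if and only if there exists $h \in \bigcap_{i=1}^{n} g^{n-i}(L(e_i))$ with $d(h) \geq n$. The forward direction is a direct observation: if $p$ occurs as a subpath of $\lseq{f}$ starting at position $k$, then the terminal angle $h := g^{k+n-1}(f)$ lies in every $g^{n-i}(L(e_i))$ and satisfies $n \leq d(f) = d(h)$ (using (f3) that $d$ is constant on $\langle g\rangle$-orbits); conversely, given such an $h$, the angle $f := g^{-(n-1)}(h)$ reconstructs a special path containing $p$. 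A useful corollary, obtained via axiom (f6), is that every path of type (R3), i.e.\ $p = \llseq{e}{n-1}$ with $n > d(e)$, fails to be a subpath of any special path: the naive witness $h = g^{n-1}(e)$ has $d(h) = d(e) < n$, and any alternative witness $h'$ with $d(h') \geq n > d(e)$ would embed $p$ into a longer special path, exhibiting $\lseq{e}$ as a proper subpath of another special path and contradicting (f6).

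For $\langle R2'\rangle \subseteq \langle R2\rangle + \langle R3\rangle$, I would take $p$ of type (R2') and split on whether $\bigcap_{i=1}^{n} g^{n-i}(L(e_i))$ is empty. If it is empty, $p$ matches (R2) verbatim (the path-composability conditions $P(g(e_i))=P(e_{i+1})$ are automatic from $p$ already being a path in $kQ_E$). Otherwise, pick any $h$ in the intersection and set $e := g^{-(n-1)}(h)$; direct substitution gives $p = \llseq{e}{n-1}$, and since $p$ is not a subpath of any special path, the criterion forces $n > d(e)$, placing $p$ in type (R3).

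For the reverse inclusion $\langle R2\rangle + \langle R3\rangle \subseteq \langle R2'\rangle$, every path $p$ of type (R2) or (R3) fails, by the criterion and its corollary above, to be a subpath of any special path. Among the subpaths of $p$ that are not subpaths of any special path, pick one of minimal length, call it $q$. By minimality every proper subpath of $q$ is a subpath of some special path, so $q$ is of type (R2'), and since $p$ factors through $q$ in $kQ_E$ we conclude $p \in \langle q\rangle \subseteq \langle R2'\rangle$.

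The main obstacle is the characterization step, specifically the subtlety that the witness $h$ in $\bigcap_{i=1}^{n} g^{n-i}(L(e_i))$ need not be unique and may come from $\langle g\rangle$-orbits of differing degree. This is why the (R3) corollary genuinely requires axiom (f6): without it, a richer witness of higher degree could make an (R3)-path accidentally a subpath of some longer special path, which would break the reverse inclusion. Once the criterion and its corollary are in place, the two inclusions reduce to the short structural arguments above.
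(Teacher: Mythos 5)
Your proof is correct and follows the same two-inclusion skeleton as the paper's, but you organize it around an explicit characterization lemma --- a path $L(e_n)\cdots L(e_1)$ in $kQ_E$ is a subpath of some special path if and only if $\bigcap_{i=1}^n g^{n-i}(L(e_i))$ contains an angle $h$ with $d(h)\geq n$ --- which the paper only uses implicitly (its ``without loss of generality, we can assume it is in the form of $\llseq{e}{n-1}$'' step is exactly the backward direction of your criterion). This reorganization buys you two things. First, your inclusion $\langle R2'\rangle\subseteq\langle R2\rangle+\langle R3\rangle$ is sharper: the dichotomy on whether the intersection is empty shows that every (R2') path is \emph{verbatim} a relation of type (R2) or of type (R3), whereas the paper instead compares the (R2') path with the special path of its starting angle and extracts a type-(R2) or type-(R3) subpath through a case analysis on the first point of deviation. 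Second, and more substantively, your corollary that a type-(R3) path $\llseq{e}{n-1}$ with $n>d(e)$ is not a subpath of \emph{any} special path genuinely invokes axiom (f6) to exclude a witness $h'$ of degree $\geq n$, which would exhibit $\lseq{e}$ as a proper prefix of a longer special path; the paper only observes that this path is not a subpath of $\lseq{e}$ itself and does not address the possibility of it sitting inside the special path of a different angle, so your argument closes a small gap there. Your remaining step (a minimal subpath of a type-(R2) or type-(R3) relation that is not a subpath of any special path is of type (R2'), and the relation factors through it) coincides with the paper's argument for that inclusion.
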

	
	\begin{proof}
		On the one hand, we prove each relation in (R2) and (R3) can be generated by (R2') in $kQ_E$. For all $\llseq{e}{n-1}$ which is a relation in (R3) with $n>d(e)$, it is obviously not a subpath of the special path $\lseq{e}$ of $e$, thus we can find some relation in (R2') that divides it exactly.
		If there exists a relation $L(e_n)\cdots L(e_2)L(e_1)$ which is a subpath of some special path in $kQ_E$, without loss of generality, we can assume it is in the form of $\llseq{e}{n-1}$. However, that means $e_n\in\bigcap_{i=1}^ng^{n-i}(L(e_i))\neq\emptyset$, contradicting with the condition in (R2).
		
		On the other hand, we prove each relation in (R2') can be generated by (R2) and (R3) in $kQ_E$. For each relation $L(e_n)\cdots L(e_2)L(e_1)$ in (R2'), if it does not contain $\lseq{e_1}$ as a proper subpath, consider the largest positive integer $m$ such that $L(e_m)\neq L(g^m(e_1))$, then by Definition \ref{f-BC}, in f-BC $E$, we have $L(e_{m})\cap L(g^{m}(e_1))=\emptyset$ since $L$ is a partition of $E$. Therefore, $L(e_m)\cdots L(e_2)L(e_1)$ is a relation in (R2) which is a subpath of $L(e_n)\cdots L(e_2)L(e_1)$.
		
		If $L(e_n)\cdots L(e_2)L(e_1)$ contains $\lseq{e_1}$ as a proper subpath, then there are two cases by considering the arrow $L(e_{d(e_1)})$ in $Q_E$.
		
		\begin{itemize}
			\item If $L(e_{d(e_1)})=L(g^{d(e_1)}(e_1))$, then it contains $L(g^{d(e_1)}(e_1))\lseq{e_1}$ which is a relation in (R3) as a subpath in $Q_E$.
			
			\item If $L(e_{d(e_1)})\neq L(g^{d(e_1)}(e_1))$, then by Definition \ref{f-BC}, in f-BC $E$, we have $L(e_{d(e_1)})\cap L(g^{d(e_1)}(e_1))=\emptyset$ since $L$ is a partition of $E$. Therefore, $L(e_{d(e_1)})\cdots L(e_2)L(e_1)$ is a relation in (R2) which is a subpath of $L(e_n)\cdots L(e_2)L(e_1)$.
		\end{itemize} 
		
		To sum up, the ideal generated by (R2') in $kQ_E$ is equal to the ideal generated by (R2) and (R3).
	\end{proof}

	\begin{remark}
		We note that, as a generating set of $I_E$, whether viewed from the combinatorial perspective (generated by relations (R1), (R2), and (R3)) or from the path algebra perspective (generated by relations (R1') and (R2')), it is not a minimal generating set of $I_E$. This is for the same reason that the three types of defining relations of the basic Brauer graph algebra do not form a minimal generating set. For instance, see~\cite[Remark~4.4]{LX}. In general, these relations constitute a Gr\"{o}bner basis of the ideal $I_E$. However, such a basis is often not reduced; that is, some relations are redundant and can be generated by others.
	\end{remark}

	We conclude this subsection by giving some examples.

	\begin{example}\label{ex-alg:1-LBGA}(Example \ref{ex:1-LBGA} revisited)
		 The quiver $Q_E$ is given by
\[\begin{tikzcd}
	& {P(1)} \\
	{P(2)} && {P(3)} \\
	& {P(4)}
	\arrow["{L(1)}"', from=1-2, to=2-1]
	\arrow["{L(1')}", from=1-2, to=2-3]
	\arrow["{L(2)}"', from=2-1, to=3-2]
	\arrow["{L(3)}", from=2-3, to=3-2]
	\arrow["{L(4)}"{description}, from=3-2, to=1-2]
\end{tikzcd}\]
		and for example, the special path starting at $1\in E$ is given by $$L(g^2(1))L(g(1))L(1)=L(4)L(2)L(1),$$ and the special path starting at $1'\in E$ is given by $$L(g^2(1'))L(g(1'))L(1')=L(4)L(3)L(1').$$
		
		The ideal $I_E$ is generated by 
		\begin{enumerate}[(R1')]
			\item $L(2)L(1)=L(3)L(1')$;
			
			\item \begin{itemize}
				\item $L(1)L(4)L(2)=L(1')L(4)L(3)=0$;
				\item $L(1)L(4)L(2)L(1)=L(4)L(2)L(1)L(4)=L(2)L(1)L(4)L(2)=0$;
				\item $L(1')L(4)L(3)L(1')=L(4)L(3)L(1')L(4)=L(3)L(1')L(4)L(3)=0$.
			\end{itemize}
		\end{enumerate}
		In fact, this algebra is not special multiserial, but it is derived equivalent to a Brauer configuration algebra (see, for instance, \cite[Example~2.14]{AZ}).
	\end{example}

		\begin{example}\label{ex-alg:2-kx/x^3}(Example \ref{ex:2-kx/x^3} revisited)
		 The quiver $Q_E$ is given by
$$
		\begin{tikzcd}
			P(1) \arrow["L(1)"', loop, distance=2em, in=215, out=145] \arrow["L(1'')"', loop, distance=2em, in=35, out=325]
		\end{tikzcd}$$
		and for example, the special path starting at $1\in E$ is given by $$L(g^2(1))L(g(1))L(1)=L(1'')L(1)^2,$$ and the special path starting at $1'\in E$ is given by $$L(g^2(1'))L(g(1'))L(1')=L(1)L(1'')L(1).$$
		
		The ideal $I_E$ is generated by 
		\begin{enumerate}[(R1')]
			\item $L(1'')L(1)=L(1)L(1'')$;
			
			\item \begin{itemize}
				\item $L(1'')L(1)^2L(1'')=L(1)L(1'')L(1)^2=L(1)^2L(1'')L(1)=0$;
				\item $L(1)^3=L(1')^2=0$.
			\end{itemize}
		\end{enumerate}
	\end{example}

		\begin{example}\label{ex-alg:3-fBCA-without-admissible-cut}(Example \ref{ex:3-fBCA-without-admissible-cut} revisited)
		 The quiver $Q_E$ is given by
$$
		\begin{tikzcd}
			P(1) \arrow["L(1)"', loop, distance=2em, in=215, out=145] \arrow["L(1'')"', loop, distance=2em, in=35, out=325]
		\end{tikzcd}$$
		and for example, the special path starting at $1\in E$ is given by $$L(g^3(1))L(g^2(1))L(g(1))L(1)=L(1'')^2L(1)^2,$$ and the special path starting at $1'\in E$ is given by $$L(g^3(1'))L(g^2(1'))L(g(1'))L(1')=L(1)L(1'')^2L(1).$$
		
		The ideal $I_E$ is generated by 
		\begin{enumerate}[(R1')]
			\item $L(1)^2L(1'')=L(1'')L(1)^2$, $L(1)L(1'')^2=L(1'')^2L(1)$;
			
			\item \begin{itemize}
				\item $L(1)L(1'')L(1)=L(1'')L(1)L(1'')=L(1)^3=L(1'')^3=0$;
				\item $L(1'')^2L(1)^2L(1'')=L(1'')L(1)^2L(1'')^2=L(1)^2L(1'')^2L(1)=L(1)L(1'')^2L(1)^2=0$.
			\end{itemize}
		\end{enumerate}
	\end{example}

	\subsection{Fractional Brauer configuration algebras of type S}\label{subsec:def-fsBGA}
	
	In this section, we recall a special class of f-BCAs, called the fractional Brauer configuration algebras of type~S in \cite{LL}. We begin by recalling some basic definitions.

	\begin{definition}\textnormal{(\cite[Definition 3.10]{LL})}
		Let $E$ be an f-BC, call a sequence $p=(g^{n-1}(e),\cdots,g(e),e)$ with $e\in E$ and $0\leq n\leq d(e)$ a standard sequence of $E$. In particular, we define $p=()_e$ when $n=0$ which is called a trivial sequence in $E$.
		
		A standard sequence of the form  $p=(g^{d(e)-1}(e),\cdots,g(e),e)$ with $e\in E$ is called a full sequence of $E$. Actually, there is a bijective map between the full sequences $p=(g^{d(e)-1}(e),\cdots,g(e),e)$ of $E$ and the special paths $\lseq{e}$ in $kQ_E$.
	\end{definition}
	
	For a standard sequence $p=(g^{n-1}(e),\cdots,g(e),e)$, we can define two associated standard sequences
	$$^\wedge p=\left\{
	\begin{array}{*{3}{lll}}
		(g^{d(e)-1}(e),\cdots,g^{n+1}(e),g^n(e))&,& \text{if $0<n<d(e)$;}\\
		()_{g^{d(e)}(e)}& ,& \text{if $n=d(e)$;}\\
		(g^{d(e)-1}(e),\cdots,g(e),e)&,&\text{if $n=0$ and $p=()_e$,}
	\end{array}
	\right.$$
	and
		$$p^\wedge =\left\{
	\begin{array}{*{3}{lll}}
		(g^{-1}(e),g^{-2}(e),\cdots,g^{n-d(e)}(e))&,& \text{if $0<n<d(e)$;}\\
		()_{e}& ,& \text{if $n=d(e)$;}\\
		(g^{-1}(e),\cdots,g^{-d(e)}(e))&,&\text{if $n=0$ and $p=()_e$.}
	\end{array}
	\right.$$
	Note that for a standard sequence $p$ of $E$, $^\wedge pp$ and $pp^\wedge$ are full sequences of $E$.

	For a standard sequence $p=(g^{n-1}(e),\cdots,g(e),e)$, define a formal sequence
		$$L(p) =\left\{
	\begin{array}{*{3}{lll}}
		\llseq{e}{n-1},\cdots,g^{n-d(e)}(e))&,& \text{if $0<n\leq d(e)$;}\\
		e_{P(e)}&,&\text{$p=()_e$.}
	\end{array}
	\right.$$ where $e_{P(e)}$ is the trivial path at vertex $P(e)$ in $kQ_E$. Actually, it is a subpath of the special path of $e\in E$. Moreover, for a set $\mathcal{X}$ of standard sequences, define $L(\mathcal{X})=\{L(p)\mid p\in\mathcal{X}\}$. By the definition of f-BCA, for any standard sequence $p$ in f-BC $E$, the formal sequence $L(p)$ corresponds to a nonzero path in the associated algebra $A=kQ_E/I_E$. 
	By abuse of notation we sometimes view a standard sequence $p$ in $E$ as a path in $kQ_E$ if no confusion can arise. 
	
	\begin{definition}\textnormal{(\cite[Definition 3.11]{LL})}
		Let $E$ be an f-BC, $p$ and $q$ be two standard sequences of $E$, define $p\equiv q$ if $L(p)=L(q)$. In the case we say $p$ and $q$ are identical.
	\end{definition}
	
	Actually, for standard sequences $p,q$, $^\wedge p\equiv ^\wedge q$ if and only if $p^\wedge\equiv q^\wedge$. For a set $\mathcal{X}$ of standard sequences, denote $[\mathcal{X}]=\{\text{standard sequence $q$}\mid \text{$q$ is identical to some $p\in\mathcal{X}$}\}$, denote $^\wedge\mathcal{X}=\{^\wedge p\mid p\in\mathcal{X}\}$ (resp. $\mathcal{X}^\wedge=\{p^\wedge\mid p\in\mathcal{X}\}$).
	
	\begin{definition}\textnormal{(\cite[Definition 3.13]{LL})}
		An f-BC $E$ is said to be of type S (or $E$ is an $f_s$-BC for short) if it satisfies additionally the following condition.
		\begin{enumerate}[(f7)]
			\item For standard sequences $p\equiv q$, we have $\left[\left[{^\wedge} p\right]{^\wedge}\right]=\left[\left[{^\wedge}q\right]{^\wedge}\right]$.
		\end{enumerate}
		The algebra $A=kQ_E/I_E$ is called the fractional Brauer configuration algebra of type S (abbr. $f_s$-BCA) associated with an $f_s$-BC $E$.
	\end{definition}
	
	\begin{remark}
		For a standard sequence $p$, we have $\left[{^\wedge} \left[p{^\wedge}\right]\right]=\left[{^\wedge}\left[({^\wedge} p){^\wedge}{^\wedge}\right]\right]=\left[{^\wedge}(\left[{^\wedge} p\right]{^\wedge}{^\wedge})\right]=\left[\left[{^\wedge} p\right]{^\wedge}\right]$ whenever $(f7)$ holds or not.
	\end{remark}

	As the definition of an $f_s$-BC is formulated using the notation of an f-BC $E$, we restate it in terms of paths in $kQ_E$.

	From now on, assume that the f-BC $E$ is symmetric, meaning that its Nakayama automorphism is the identity. In this case, all special paths in $Q_E$ are cycles, and $\wwedge p = p\wwedge$ holds for every standard sequence $p$ in $E$. We now introduce the following additional condition.
	\begin{enumerate}[\textit{(sf7)}]
		\item For two nonzero relations $p-q$, $p'-q'$ of type 1 in $I_E$, if $pp'$, $qp'$ and $pq'$ are some special paths in $Q_E$ at the same time, then so is $qq'$.
	\end{enumerate}
	The following lemma shows that conditions $(f7)$ and $(sf7)$ are equivalent for symmetric f-BCs.
	
	\begin{lemma}\label{f7'}
		The condition (f7) implies (sf7). Moreover, if the f-BC $E$ is symmetric, then the condition (sf7) also implies (f7).
	\end{lemma}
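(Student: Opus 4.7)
The plan is to first derive a working characterization of the set $[[{^\wedge p}]^\wedge]$: under the standing symmetric assumption one has $s^\wedge={^\wedge s}$, and together with the fact that ${^\wedge s}\cdot s$ and $s\cdot{^\wedge s}$ are two cyclic rotations of a single special cycle, a direct unwinding of the definitions will give
\[
r\in[[{^\wedge p}]^\wedge]\ \Longleftrightarrow\ r\cdot{^\wedge p}\text{ is a special path in }Q_E,
\]
for every standard sequence $r$. This reduces both (f7) and (sf7) to statements about which concatenations of paths are special.

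For (f7)$\Rightarrow$(sf7), I would start from nonzero type 1 relations $p-q$ and $p'-q'$ with $pp'$, $qp'$ and $pq'$ all special, and write $pp'=\lseq{e_0}$ to produce a natural representative $p_A=(g^{d(e_0)-1}(e_0),\ldots,g^{m}(e_0))$ of $p$ (with $m=l(p')$) together with $p'_A=(g^{m-1}(e_0),\ldots,e_0)$; the identity $g^{d(e_0)}=\mathrm{id}$ will telescope the powers of $g$ to give ${^\wedge p_A}=p'_A\equiv p'$. Writing $pq'=\lseq{e_2}$ similarly produces a representative $p_B$ of $p$ with ${^\wedge p_B}\equiv q'$. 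Since $p_A\equiv p_B$, applying (f7) will yield $[[{^\wedge p_A}]^\wedge]=[[{^\wedge p_B}]^\wedge]$, i.e.\ $[[p']^\wedge]=[[q']^\wedge]$, and the characterization then translates this as: $r\cdot p'$ is special if and only if $r\cdot q'$ is, for every $r$. Specializing to $r=q$ converts the hypothesis $qp'$ special into the desired conclusion that $qq'$ is special.

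For (sf7)$\Rightarrow$(f7), I would start from $p\equiv q$ and, by the characterization, reduce to showing that $r\cdot{^\wedge p}$ special implies $r\cdot{^\wedge q}$ special. The trivial subcases ${^\wedge p}\equiv{^\wedge q}$ and $r\equiv p$ can be dispatched directly, the latter by the cyclic-rotation identity $r\cdot{^\wedge q}=p\cdot{^\wedge q}=q\cdot{^\wedge q}=\lseq{g^n(e_q)}$. Outside these cases, two nonzero type 1 relations will be available: ${^\wedge p}-{^\wedge q}$, since these parallel caps share the common top $p$ (using $p\equiv q$) that completes both to special paths $p\cdot{^\wedge p}=\lseq{g^n(e_p)}$ and $p\cdot{^\wedge q}=\lseq{g^n(e_q)}$; and $p-r$, because decomposing $r\cdot{^\wedge p}=\lseq{e''}$ presents $r$ and $p$ as parallel paths with ${^\wedge r}\equiv{^\wedge p}$. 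I would then apply (sf7) to the pair $p-r$ and ${^\wedge p}-{^\wedge q}$: the three special paths $p\cdot{^\wedge p}$, $r\cdot{^\wedge p}$ and $p\cdot{^\wedge q}$ will force $r\cdot{^\wedge q}$ to be a special path.

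The main obstacle I anticipate is bookkeeping distinct standard-sequence representatives of a single path: (f7) has nontrivial content only when $p$ and $q$ coincide as paths while ${^\wedge p}$ and ${^\wedge q}$ differ as standard sequences, and the use of (f7) in the first implication is meaningful precisely because the two representatives $p_A$ and $p_B$ of $p$ need not coincide as standard sequences even though they represent the same path in $Q_E$.
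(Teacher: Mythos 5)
Your overall strategy coincides with the paper's: both directions are handled by translating $\left[\left[{}^\wedge p\right]{}^\wedge\right]$ into the set of paths that complete $L({}^\wedge p)$ to a special path, and your argument for (sf7) $\Rightarrow$ (f7) is essentially the paper's argument run forwards instead of by contradiction. You manufacture exactly the same two type 1 relations, namely $L({}^\wedge p)-L({}^\wedge q)$ (with common completion $L(p)=L(q)$) and $L(p)-L(r)$ (with common completion $L({}^\wedge p)$, after a cyclic rotation), and feed the three special paths $L(p)L({}^\wedge p)$, $L(r)L({}^\wedge p)$, $L(p)L({}^\wedge q)$ into (sf7) to force $L(r)L({}^\wedge q)$ to be special. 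That direction, including the treatment of the degenerate subcases where one of the two relations is zero, is sound.

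The gap is in the first implication. The lemma asserts that (f7) implies (sf7) \emph{unconditionally}; only the converse carries the hypothesis that $E$ is symmetric, and the paper's proof of the first implication never uses symmetry. Your argument for it does, in two places. First, the telescoping $g^{d(e_0)}=\mathrm{id}$: for the terminal segment $p_A=(g^{d-1}(e_0),\dots,g^{m}(e_0))$ one has ${}^\wedge p_A=(g^{d+m-1}(e_0),\dots,g^{d}(e_0))$, which represents the path $L(g^{m-1}(\sigma(e_0)))\cdots L(\sigma(e_0))$ with $\sigma$ the Nakayama automorphism; by (f5) this need not equal $p'$ unless $\sigma=\mathrm{id}$. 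Second, your characterization of $\left[\left[{}^\wedge p\right]{}^\wedge\right]$ places the test path $r$ on the terminal side, whereas $s\,s^\wedge$ is the full sequence whose path is $L(s)L(s^\wedge)$ with $L(s^\wedge)$ on the \emph{initial} side; identifying the two, and converting the hypothesis ``$qp'$ special'' into membership of $q$ in a ${}^\wedge$-completion set, both require rotating special cycles. The paper's fix is to take the two standard-sequence representatives $p_1,p_2$ of the shared initial segment $p'$ inside the special paths $pp'$ and $qp'$; their caps ${}^\wedge p_1,{}^\wedge p_2$ then represent $p$ and $q$ directly with no appeal to $g^{d}=\mathrm{id}$, (f7) gives $\left[\left[{}^\wedge p_1\right]{}^\wedge\right]=\left[\left[{}^\wedge p_2\right]{}^\wedge\right]$, and ``$pq'$ special'' places $q'$ in the first set, hence in the second, which is exactly ``$qq'$ special''. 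If you only intend the lemma for symmetric $E$ (the only case used later in the paper) your version goes through, but as a proof of the statement as written it falls short of the first clause.
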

	
	\begin{proof}
		On the one hand, we show that $(f7)$ implies $(sf7)$. Suppose, to the contrary, that $(sf7)$ does not hold. Then there exist two nonzero relations of type~1, say $p - q$ and $p' - q'$, in $I_E$ such that $pp'$, $qp'$, and $pq'$ are distinct special paths in $Q_E$, while $qq'$ is not. Denote $l(p') = n$. By condition~(f6) in Definition~\ref{f-BC}, we have $n \ge 1$. In this case, we can find distinct angles $e, h \in E$ such that 
\[
p' = \llseq{e}{n-1} = \llseq{h}{n-1},
\]
and 
\[
p = L(g^{d(e)-1}(e)) \cdots L(g^{n}(e)), \quad 
q = L(g^{d(h)-1}(h)) \cdots L(g^{n}(h)).
\]

		Denote the standard sequences corresponding to $p'$ by $$p_1=(g^{n-1}(e),\cdots,e)$$ and $$p_2=(g^{n-1}(h),\cdots,h),$$ thus $p_1\equiv p_2$. Since $pq'$ is a special path in $E$ but $qq'$ is not, we have $q'\in L(\left[\left[{^\wedge} p_1\right]{^\wedge}\right])$ but $q'\notin L(\left[\left[{^\wedge} p_2\right]{^\wedge}\right])$. Therefore, $(f7)$ is also not true at the same time.
		
		On the other hand, we prove $(sf7)$ implies $(f7)$ when $E$ is symmetric. If not, there exist standard sequences $p\equiv q$ and $p\neq q$, such that $\left[\left[{^\wedge} p\right]{^\wedge}\right]\neq\left[\left[{^\wedge}q\right]{^\wedge}\right]$, which also means $L(\left[{^\wedge} p\right]{^\wedge})\neq L(\left[{^\wedge}q\right]{^\wedge})$.
		
		To be more specific, we may assume that there exist a path $p_0$ in $Q_E$, such that $L(\wwedge p)p_0$ is a special path in $Q_E$, but $L(\wwedge q)p_0$ is not. In this case, we have $L(\wwedge p)\neq L(\wwedge q)$. Actually, by the definition of standard sequences and $L(p)=L(q)$, we have $L(\wwedge p)L(p)$ and $L(\wwedge q)L(p)$ are special paths in $Q_E$. Since $E$ is symmetric, all special paths are cycles in $Q_E$. Therefore, $L(p)L(\wwedge p)$ and $L(p)L(\wwedge q)$ are different special paths in $Q_E$ at the same vertex, that means $L(\wwedge p)-L(\wwedge q)$ is a relation of type 1. In the same reason, we have $L(p)-p_0$ is also a relation of type 1. However, $L(\wwedge p)-L(\wwedge q)$ and $L(p)-p_0$ do not fit the condition $(sf7)$, a contradiction!
	\end{proof}

	By Lemma~\ref{f7'}, the f-BCs in Examples~\ref{ex:1-LBGA}--\ref{ex:3-fBCA-without-admissible-cut} are all $f_s$-BCs. We refer the reader to \cite[Examples~3.7 and~3.8]{LL} for examples of f-BCs that are not $f_s$-BCs.
	
	\begin{proposition}\textnormal{(\cite[Proposition 5.2 and 5.4]{LL})}\label{Frob}
		If $E$ is a \fsbc with a finite angle set, then the corresponding \fsbca $A=kQ_E/I_E$ is a finite-dimensional Frobenius algebra with the Nakayama automorphism of $A$ induced by the inverse of the Nakayama automorphism of the \fsbc $E$.
	\end{proposition}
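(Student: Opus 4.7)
The plan is to proceed in three stages: establish finite dimensionality, construct a symmetrizing linear form to exhibit the Frobenius structure, and then read off the Nakayama automorphism.

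For finite dimensionality, I would use the fact that the nonzero paths in $A = kQ_E/I_E$ are represented by $L(p)$ for standard sequences $p$, modulo the equivalence $\equiv$. Since $E$ is finite and $d$ is constant on each $\langle g\rangle$-orbit, the total number of standard sequences is bounded above by $\sum_{v}|v|\cdot d(v)$, which is finite, so $\dim_k A < \infty$. One must then check that the $L$-images of a transversal of $\equiv$-classes span $A$ and are linearly independent modulo $I_E$; the span is immediate from (R2') and (R3), and linear independence follows from the fact that relations of type (R1) only identify standard sequences that are already $\equiv$-equivalent.

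For the Frobenius structure, I would define a linear functional $\varphi: A\to k$ by $\varphi(L(p)) = 1$ when $p$ is a full sequence and $\varphi(L(p)) = 0$ on all other basis elements. One must verify that $\varphi$ vanishes on $I_E$: relations of type (R1) identify two full sequences with the same value $1$, while (R2) and (R3) describe paths that are not subpaths of any special path, hence are $0$. Non-degeneracy of the bilinear form $(x,y)\mapsto \varphi(xy)$ reduces to showing that for every nonzero basis element $L(p)$ with $p = (g^{n-1}(e),\ldots,e)$, the product $L(p)L(p\wwedge) = L(\,p\,p\wwedge\,)$ is a special path, and similarly $L(\wwedge p)L(p)$ is a special path on the other side. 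The condition (f7) is what makes the assignments $[p]\mapsto[\wwedge p]$ and $[p]\mapsto[p\wwedge]$ well-defined on $\equiv$-classes, which is essential for the non-degeneracy argument to be independent of the chosen representative.

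The Nakayama automorphism $\nu$ of $A$ is characterized by $\varphi(xy) = \varphi(y\nu(x))$. On idempotents, this translates into a permutation of the vertex set $Q_0 = \{P(e)\mid e\in E\}$: the unique vertex $P(h)$ such that $\varphi(e_{P(h)} A\, e_{P(e)}) \neq 0$ is the endpoint $P(g^{d(e)}(e)) = P(\sigma(e))$ of the special path at $P(e)$. This forces $\nu(e_{P(e)}) = e_{P(\sigma^{-1}(e))}$, and extending to arrows by tracking how special paths are reparametrized shows that $\nu$ is induced by $\sigma^{-1}$ on $A$.

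The main obstacle is step two, specifically verifying non-degeneracy: one must show that the operations $\wwedge$ descend to the basis $\{[L(p)]\}$ in a way that produces special paths on both the left and the right of every nonzero element. This is precisely where symmetry of $E$ and the type S condition (through Lemma \ref{f7'}) are used, since (sf7) guarantees compatibility of the two parenthesizations $[[\wwedge p]\wwedge]$ needed to check that $\varphi$ pairs each basis element non-trivially with some other basis element on each side.
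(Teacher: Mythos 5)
The paper does not actually prove this proposition; it is imported verbatim from \cite[Propositions 5.2 and 5.4]{LL}, so there is no in-paper argument to compare yours against. Judged on its own, your outline follows the expected route (finite spanning set indexed by standard sequences, a functional supported on full cycles, non-degeneracy of the associated form, then reading off the Nakayama permutation from where special paths end), and the final step identifying $\nu$ with $\sigma^{-1}$ on vertices is essentially right. However, two of your intermediate steps contain genuine gaps.

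First, your justification of linear independence --- ``relations of type (R1) only identify standard sequences that are already $\equiv$-equivalent'' --- is false. By definition $p\equiv q$ means $L(p)=L(q)$ as paths in $kQ_E$, whereas a type 1 relation equates two genuinely distinct paths, possibly of different lengths: in Example \ref{ex1} the relation $L(3')-L(2)L(1)L(3)$ identifies a length-one path with a length-three path. The basis of $A$ is indexed by the coarser equivalence generated by $\equiv$ together with these type 1 identifications, and proving that a transversal of \emph{that} equivalence remains linearly independent in the quotient is precisely the nontrivial content you skip. (Finite-dimensionality itself only needs spanning, so that conclusion survives.) Second, and more seriously, non-degeneracy does not reduce to showing that each basis element $L(p)$ pairs nontrivially with $L(p^\wedge)$: that only shows that no single basis vector lies in the radical of the form. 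If $L(p_1)L(p_1^\wedge)$ and $L(p_2)L(p_1^\wedge)$ are both full cycles, a combination $c_1L(p_1)+c_2L(p_2)$ can pair to zero against $L(p_1^\wedge)$, and nothing in your argument rules this out. The standard repair is a socle argument: show that the socle of $e_iA$ (resp.\ $Ae_i$) is one-dimensional, spanned by the common class of all full cycles through $i$ (these are all identified by the $k=0$ type 1 relations); then any nonzero element of the radical of the form generates a nonzero one-sided ideal meeting the socle, and composing with a suitable vertex idempotent produces a nonzero value of $\varphi$. Pinning down the socle is exactly where condition (f7) and the precise basis description enter; your appeal to (f7) for ``well-definedness of $\wwedge$ on classes'' points in the right direction but is never connected to this computation.
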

	
	Therefore, if the \fsbc with a finite angle set is symmetric, then by \cite[Proposition 5.5]{LL}, $A = kQ_E / I_E$ is also symmetric; that is, $A \cong \mathrm{Hom}_k(A, k)$ as $A$--$A$-bimodules. (See for example, \cite[Theorem~3.1]{Ric} for equivalent characterizations of symmetric algebras). Such algebras are called symmetric fractional Brauer configuration algebras of type~S (abbr. symmetric $f_s$-BCAs).

	\section{The $f_s$-BC associated to a monomial algebra}
	
	In this section, we construct an \fsbc from a given monomial algebra. Actually, it is a generalization of the graph of a gentle algebra in \cite[Section 3.1]{Sch}.

	Let $A = kQ / I$ be a finite-dimensional monomial algebra, which means that $I$ is an ideal in $kQ$ generated by paths. 
	Consider the set $\mathcal{M} = \{p_1, \dots, p_m\}$ of maximal paths in $A$, that is, for each $p \in \mathcal{M}$ and each arrow $\alpha \in Q_1$, we have $\alpha p = 0 = p \alpha$ in $A$. 
	The set $\mathcal{M}$ is uniquely determined by $A = kQ / I$, since by \cite[Proposition~2.5]{Green}, the ideal $I$ admits a unique minimal generating set $\mathcal{G}$ consisting of paths. 
	Consequently, the set of paths in $kQ$ that contain no subpath from $\mathcal{G}$ forms a finite $k$-linear basis $\mathcal{B}$ of $A$. 
	The subset $\mathcal{M} \subseteq \mathcal{B}$ consisting of maximal elements is therefore unique for a given monomial algebra $A = kQ / I$.

	Define a quadruple $E_A=(E,P,L,d)$ of the monomial algebra $A=\qa$ as follows.
	
	\begin{itemize}
		\item $E=\bigcup_{p\in\mathcal{M}}\{(e_i,p)\;|\;p=(e_1\rightarrow e_2\rightarrow\cdots\rightarrow e_n)\}$.
		
		\item $P((e_i,p))=\{(e_i',p')\in E\mid e_i=e_i'\in Q_0\}$.
		
		\item $L((e_i,p))=\{(e_i',p')\;|\;\text{the arrow starting at $e_i$ in $p$ is same as the arrow starting at $e_i'$ in $p'$}\}$. In particular, for every \(p \in \mathcal{M}\), we set \(L((t(p), p)) = \{(t(p), p)\}\).
		
		\item $d((e_i,p))=l(p)+1$.
		
		\item if $p=(e_1\rightarrow e_2\rightarrow\cdots\rightarrow e_n)$, then $g((e_i,p))=(e_{i+1},p),i=1,\cdots,n-1$ and $g((e_n,p))=(e_{1},p)$.
	\end{itemize}
	We emphasize that we regard $(e_i, p)$ and $(e_j, p)$ with $i \neq j$ as distinct angles in $E_A$, even when $e_i = e_j$ in $Q_0$. 
	It is easy to prove that $E_A$ is f-degree-free, that $P$ and $L$ form partitions of $E_A$, and that each $\langle g \rangle$-orbit corresponds to a unique maximal path in $A$.

	Now we show that the above definition indeed defines an $f_s$-BC.

	\begin{proposition}\label{EA is fsbc}
		$E_A$ is a symmetric $f_s$-BC.
	\end{proposition}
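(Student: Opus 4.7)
The plan is to verify each of the defining conditions (f1)--(f6) of an f-BC, the symmetry of the Nakayama automorphism, and the type S condition (f7). Conditions (f1)--(f5) and the symmetry follow almost directly from the definitions. For (f1), two angles in a common $L$-class either start the same arrow of $Q$ (so lie at its source) or form a singleton class $\{(t(p),p)\}$, so in either case they share a vertex of $Q$ and $L(e)\subseteq P(e)$; $P(e)$ is finite because $A$ is finite-dimensional. For (f2), if $L(e_1)=L(e_2)$ is non-singleton then both angles start a common arrow $\alpha$, so $g(e_1),g(e_2)$ sit at $t(\alpha)$; if the class is a singleton then $e_1=e_2$. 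Condition (f3) holds because $d((e_i,p))=l(p)+1$ depends only on the $g$-orbit of $(e_i,p)$. Crucially, this value equals the size of that orbit, so the degree function is trivial and $g^{d(e)}(e)=e$ for every $e\in E$; this makes (f4) and (f5) automatic and shows the Nakayama automorphism is the identity, so $E_A$ is symmetric.

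For (f6), the key observation is that for each maximal path $p\in\mathcal{M}$, the singleton $L$-class $L((t(p),p))=\{(t(p),p)\}$---call it the \emph{closing arrow} of $p$---appears in exactly one $g$-orbit (that of $p$), and exactly once in any special path based in that orbit. Consequently, a special path from a different orbit contains a different closing arrow and cannot be a subsequence of the first, and special paths within the same orbit all have the common length $d(e)$, precluding any proper subsequence relation.

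For (f7), the main tool is a \emph{uniqueness lemma}: if the formal sequence $L(r)$ of a standard sequence $r$ contains a closing arrow of some maximal path $p_0$, then $r$ is uniquely determined by $L(r)$, so $[r]=\{r\}$. Indeed, the closing arrow forces $r$ to sit inside the cycle of $p_0$ in $Q_{E_A}$, and its relative position inside $L(r)$ fixes the starting angle of $r$. Given $p\equiv q$ with $p=(e_{n-1},\ldots,e_0)$, I split into three cases. (A) $L(p)$ contains a closing arrow: the lemma forces $q=p$, so (f7) is automatic. (B) $p=()_e$ is trivial, hence $q=()_{e'}$ with $P(e')=P(e)$: here $\wwedge p$ is the full special path of $e$ and contains the closing arrow of $e$'s maximal path, so by the lemma $[\wwedge p]=\{\wwedge p\}$; therefore $[\wwedge p]\wwedge=\{p\}$ and $[[\wwedge p]\wwedge]=[()_e]=\{()_{e''}\mid P(e'')=P(e)\}$; the same description applies to $q$, and the two sets coincide since $P(e)=P(e')$. (C) $p$ is non-trivial and $L(p)$ contains no closing arrow (so $0<n<d(e_0)$): then $\wwedge p$ necessarily contains the closing arrow of $e_0$'s maximal path, since $p$ and $\wwedge p$ together traverse the whole $g$-orbit of $e_0$. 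Applying the uniqueness lemma and the symmetric-case identity $(\wwedge p)\wwedge=p$ yields $[[\wwedge p]\wwedge]=[p]$, and analogously $[[\wwedge q]\wwedge]=[q]=[p]$.

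The main obstacle is case (C), where one must use the symmetry of $E_A$ to identify $(\wwedge p)\wwedge$ with $p$ and thereby propagate the equivalence $p\equiv q$ to the outer equivalence classes; the preparatory uniqueness lemma carries most of the weight in all three cases.
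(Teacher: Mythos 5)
Your proof is correct. For (f1)--(f6) and the symmetry of the Nakayama automorphism it runs essentially parallel to the paper's argument (for (f6) the paper derives a contradiction by showing that a proper subsequence relation between special paths would force one maximal path of $A$ to be a proper subpath of another; your closing-arrow argument reaches the same conclusion from the fact that the singleton class $L((t(p),p))$ lives in a single $\langle g\rangle$-orbit). The genuine divergence is in (f7). The paper never verifies (f7) directly: it first establishes Lemma \ref{f7'}, which for a symmetric f-BC replaces (f7) by the condition (sf7) on pairs of type-1 relations, and then checks (sf7) for $E_A$ by showing that the arrow $L((t(m),m))$ attached to a maximal path $m$ can occur on only one special cycle, so the forbidden configuration of (sf7) cannot arise. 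You instead attack (f7) head-on through your uniqueness lemma --- any standard sequence whose associated path contains a closing arrow is the sole member of its identity class --- and the three-case analysis (A)/(B)/(C). Both arguments ultimately exploit the same structural fact, namely that the singleton class at the terminus of each maximal path pins down an orbit and a position within it, but your route bypasses (sf7) and Lemma \ref{f7'} entirely, at the cost of having to manipulate the operations $[\,\cdot\,]$, $\wwedge(-)$ and $(-)\wwedge$ explicitly; the paper's route localizes all of that bookkeeping once and for all in Lemma \ref{f7'}. One minor remark: the identity $(\wwedge p)\wwedge=p$ that you invoke in case (C) follows directly from the defining formulas together with (f3) and does not actually use the symmetry of $E_A$, so your verification of (f7) is in fact independent of the symmetry argument.
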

	
	\begin{proof}
		We check the conditions in Definition \ref{f-BC} step by step.
		
		\textit{(f1)}.\quad Since $A$ is finite-dimensional and $\mathcal{M}$ is contained in a $k$-basis of $A$, by the definition of $E_A$, the angle set $E$ is finite. Moreover, each $P((e_i,p))\subseteq E$ is finite. By the definition of the partitions $P$ and $L$ in $E_A$, for all $(e_i',p')\in L((e_i,p))$, there exists an common subarrow $\alpha\in Q_1$ of $p$ and $p'$, such that $s(\alpha)=e_i=e_i'$, thus $(e_i',p')\in P((e_i,p))$, that means $L((e_i,p))\subseteq P((e_i,p))$ for all $(e_i,p)\in E$.
		
		\textit{(f2)}.\quad If $L((e_i,p))=L((e_i',p'))$, then there exists an arrow $\alpha$ in $Q$ with $\alpha\mid p$ and $\alpha\mid p'$, such that $s(\alpha)=e_i=e_i'\in Q_0$. Denote the terminus vertex of $\alpha$ in $p$ and $p'$ by $e_j$ and $e_j'$, respectively. Then $e_j=e_j'\in Q_0$. Therefore, 
		$$P(g(e_i,p))=P((e_j,p))=P((e_j',p'))=P(g(e_i',p')).$$
		
		\textit{(f3)}.\quad For all $(e_i,p)\in E$, each $\langle g\rangle$-orbit of $(e_i,p)$ is defined by the maximal path $p\in\mathcal{M}$. Therefore, by the definition of the degree function, we have $d((e_i,p))=d(g^k(e_i,p))$, $k\in\mathbb{Z}$.
		
		\textit{(f4)} and \textit{(f5)}.\quad For all $e\in E$ which is an angle of $E_A$, we have $g^{d(e)}(e)=e$. Therefore, the conditions \textit{(f4)} and \textit{(f5)} are automatically established.
		
		\textit{(f6)}.\quad If $\lseq{e}$ is a proper subsequence of $\lseq{h}$ for some $e,h \in E$, without loss of generality, we can assume that there exist a positive integer $n<d(h)$, such that $$\lseq{e}=\llseq{h}{n-1}.$$ To be more specific, let $e=(e_i,p)$ and $h=(e_j,q)$. By the definition of the partition $L$ of $E_A$, there exist a non-trivial path $p'$ in $Q$, such that $q=p'p$. However, that means $p$ is a proper subpath of $q$, contradict to $p\in\mathcal{M}$ which is a maximal path in $A$.
		
		To sum up, by Definition \ref{f-BC}, we have $E_A$ is a \fbc. Moreover, since for all $e\in E$ which is an angle of $E_A$, we have $g^{d(e)}(e)=e$. Therefore,  the Nakayama automorphism $\sigma$ of $E_A$ is identity. That means, \fbc $E_A$ is symmetric.
		
		By using Lemma \ref{f7'}, we show that the \fbc $E_A$ fits the condition \textit{(sf7)}. If not, consider the quiver $Q_{E_A}$ associated with $E_A$, there exist $p_i:=\llseq{h_i}{n_i-1}$ with $i=1,2,3,4$, and two nonzero paths $q_1,q_2$ in $Q_{E_A}$ such that $q_1p_1$, $q_1p_2$, $q_2p_3$, $q_2p_4$ are special paths in $Q_{E_A}$. In other words, $p_1-p_2$, $p_3-p_4$ are relations of type 1 in $E_A$. Moreover, we can assume that $p_1p_3$, $p_2p_3$, $p_1p_4$ are special paths in $E_A$, but $p_2p_4$ is not.
		
		However, since $p_1\neq p_2$ and $q_1p_1$, $q_1p_2$ are special paths in $Q_E$, we have the corresponding elements $M_1$, $M_2$ in $\mathcal{M}$. Since $L((t(M_1),M_1))$ is trivial, its corresponding arrow in $Q_{E_A}$ can only appear in exactly one special path (under cyclic permutation of cycles in $Q_{E_A}$) in $Q_{E_A}$. Therefore, $L((t(M_1),M_1))\mid p_1$. Moreover, since $p_3\neq p_4$ and $p_1p_3$, $p_1p_4$ are special paths in $Q_E$, without loss of generality, we have $p_4\neq q_1$ (otherwise, $p_2p_4=p_2q_1$ is naturally a special path). Therefore, the arrow $L((t(M_1),M_1))$ appears in distinct special paths $q_1p_1$ and $p_1p_4$ in $Q_{E_A}$. However, $L((t(M_1),M_1))$ is a trivial angle set, which can only be involved in exactly one $\langle g\rangle$-orbit, a contradiction!
		
		In conclusion, $E_A$ fits \textit{(sf7)}. By Lemma \ref{f7'}, $E_A$ fits \textit{(f7)} since $E_A$ is symmetric. Therefore, $E_A$ is a symmetric \fsbc.
	\end{proof}

	Let $A = kQ / I$ be a finite-dimensional monomial algebra, and let $E_A = (E, P, L, m)$ be the \fsbc\ associated with $A$. 
	Denote by $A_E$ the $f_s$-BCA corresponding to $E_A$. 
	Since the angle set $E$ is finite, $A_E$ is finite-dimensional. 
	By Proposition~\ref{Frob} and the subsequent discussion, $A_E$ is a finite-dimensional symmetric algebra.

	\section{Trivial extensions of monomial algebras}

	Let $A = kQ / I$ be a finite-dimensional $k$-algebra, and let $D(A) = \mathrm{Hom}_k(A, k)$ denote its $k$-linear dual. Recall that the trivial extension of $A$, denoted by $T(A) = A \rtimes D(A)$, is the algebra whose underlying vector space is $A \oplus D(A)$ and whose multiplication is given by
\[
(a, f)(b, g) = (ab,\, ag + fb),
\]
for all $a, b \in A$ and $f, g \in D(A)$. Here $D(A)$ is viewed as an $A$--$A$-bimodule via the action defined as follows: for $a, b \in A$ and $f \in D(A)$,
\[
(afb)(x) = f(bxa), \quad \text{for all } x \in A.
\]
It is well known that the trivial extension algebra $T(A)$ is symmetric (see, for example, \cite[Proposition~6.5]{RS}). Moreover, it is shown in \cite[Proposition~2.2]{FP} that the vertices of the quiver $Q_{T(A)}$ of $T(A)$ correspond to those of the quiver $Q$ of $A$, and that the number of arrows from a vertex $i$ to a vertex $j$ in $Q_{T(A)}$ equals the number of arrows from $i$ to $j$ in $Q$, plus the dimension of the $k$-vector space $e_i(\mathrm{soc}_{A^e} A)e_j$.

Suppose that $I$ is generated by paths, that is, $A$ is a monomial algebra. Consider 
\[
\mathcal{B} = \{\, p \in Q \mid p \notin I \,\}.
\]
The set $\pi(\mathcal{B})$, where $\pi : kQ \to A$ is the canonical surjection, forms a $k$-basis of $A$. By abuse of notation, we identify $\mathcal{B}$ itself with this $k$-basis. By \cite[Proposition~2.2]{FP}, the set $\mathcal{M}$ of maximal paths of $A$ is a subset of $\mathcal{B}$ and forms a $k$-basis of $\soc_{A^e}(A)$. Hence, if we denote by $(Q_{T(A)})_1$ the arrow set of the quiver $Q_{T(A)}$ of $T(A)$, and by $Q_1$ the arrow set of the quiver $Q$ of the monomial algebra $A = kQ/I$, then  
\[
|(Q_{T(A)})_1| = |Q_1| + |\mathcal{M}|.
\]

	\begin{lemma}\label{quiver of AE/TA}
		Let $A=\qa$ be a finite-dimensional monomial algebra. Denote the \fsbca associated with $A$ by $A_E=kQ_E/I_E$ and the trivial extension of $A$ by $T(A)=kQ_{T(A)}/I_{T(A)}$. Then the quiver $Q_E$ is isomorphic to $Q_{T(A)}$.
	\end{lemma}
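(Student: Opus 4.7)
The plan is to construct an explicit quiver isomorphism $\phi\colon Q_E \to Q_{T(A)}$. By the description recalled just before the lemma (from \cite{FP}), the vertices of $Q_{T(A)}$ coincide with $Q_0$, so I would first check that the rule $P((e_i,p))\mapsto e_i$ defines a bijection from the vertex set $\{P(e)\mid e\in E\}$ of $Q_E$ onto $Q_0$. Well-definedness is immediate from the definition of $P$; surjectivity amounts to showing that every $i\in Q_0$ lies on some maximal path, which follows from the finite-dimensionality of $A$ by extending the trivial path $e_i$ on both sides as long as the extensions remain nonzero in $A$.

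The next step is to classify the arrows of $Q_E$, i.e.\ the $L$-classes, into two types. Given $L((e_i,p))$ with $p=e_1\to\cdots\to e_n$, if $i<n$ then by definition $(e_{j'}',p')\in L((e_i,p))$ iff the arrow of $Q$ starting at position $i$ in $p$ equals the one starting at position $j'$ in $p'$; hence such non-singleton classes are in bijection with the arrows $\alpha\in Q_1$ that occur in some maximal path. By the same extension argument as above, every $\alpha\in Q_1$ does so. The associated arrow of $Q_E$ runs from $s(\alpha)$ to $t(\alpha)$. If $i=n$, the remark right after the definition of $E_A$ gives $L((t(p),p))=\{(t(p),p)\}$, so these singleton classes are in bijection with $\mathcal{M}$, and each yields an arrow of $Q_E$ from $P((e_n,p))=t(p)$ to $P(g((e_n,p)))=P((e_1,p))=s(p)$.

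To conclude, I would compare this description with the quiver of $T(A)$. Since $A$ is monomial, $\mathcal{M}$ is a $k$-basis of $\soc_{A^e}(A)$, and for $p\in\mathcal{M}$ one has $e_ipe_j=p$ exactly when $i=t(p)$ and $j=s(p)$, so
$$\dim_k e_i(\soc_{A^e}A)e_j \;=\; \#\{p\in\mathcal{M}\mid t(p)=i,\ s(p)=j\}.$$
Combined with the formula for $|(Q_{T(A)})_1|$ quoted before the lemma, this shows that the number of arrows from $i$ to $j$ in $Q_{T(A)}$ equals the number of arrows of $Q$ from $i$ to $j$ plus the number of maximal paths from $j$ to $i$, which is precisely the count produced in the previous paragraph for $Q_E$. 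Extending the vertex map by the obvious fibre-wise bijection, sending each non-singleton class to its arrow of $Q$ and each singleton $L((t(p),p))$ to the new arrow of $Q_{T(A)}$ corresponding to $p\in\soc_{A^e}(A)$, yields the required isomorphism.

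The only real obstacle is the bookkeeping at the arrow level, namely arguing that every arrow of $Q$ is realised by a non-singleton $L$-class and every vertex of $Q_0$ appears in a maximal path; both points reduce to the finite-dimensionality of $A$ via the maximal-path extension argument. Everything else is a direct unwinding of the definitions of $E_A$ and $Q_E$ against the Fernandez--Platzeck description of $Q_{T(A)}$.
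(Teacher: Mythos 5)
Your proof is correct and follows essentially the same route as the paper's: identify the vertices of $Q_E$ with $Q_0$ via the common first coordinate of each $P$-class, then match the $L$-classes at non-terminal angles with the arrows of $Q$ and the terminal singletons $L((t(p),p))$ with the socle arrows of $Q_{T(A)}$, using the Fernandez--Platzeck description. One small wording fix: the correct dichotomy on $L$-classes is terminal position ($i=n$) versus non-terminal position ($i<n$), not singleton versus non-singleton --- an arrow $\alpha$ lying on a unique maximal path at a unique position gives a singleton class $L((e_i,p))$ with $i<n$, so your final clause ``sending each non-singleton class to its arrow of $Q$'' should read ``each non-terminal class''.
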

	
	\begin{proof}
		Denote the \fsbc associated with $A$ by $E_A=(E,P,L,m)$. Since the vertices of $Q_E$ are corresponding to the partition $P$ of $E$ and for all $(e_i,p)\in E$, the angles in $P((e_i,p))$ have a common first coordinate in $Q_0$, the vertices in $Q_E$ are corresponding to the vertices in $Q$. Thus the vertices in $Q_E$ are corresponding to the vertices in $Q_{T(A)}$.
		
		Now consider the arrows in $Q_E$ from a vertex $i$ to a vertex $j$ in $Q_E$. Denote the trivial path corresponding to $i$ and $j$ by $e_i$ and $e_j$ in $Q$. For each arrow $\alpha$ in $e_jQe_i$, it can be extended to a maximal path $p$ (may not unique) in $A$. Moreover, by definition of the \fsbc $E_A$, $g(e_i,p)=(e_j,p)$. Therefore, by definition of the quiver of \fbc, there is an arrow in $Q_E$ corresponding to $\alpha$. If we choose a different maximal path $p'$ containing $\alpha$, then by definition of the \fsbc $E_A$, $L((e_i,p))=L((e_i,p'))$. Thus this correspondence is a bijection between the arrows in $Q_{T(A)}$ corresponding to an arrow $\alpha\in e_jQe_i$ in $Q$ and the arrows $L((e_i,p))$ in $Q_E$ with $p$ a maximal path in $A$ containing $\alpha$. Moreover, for all maximal path $p\in\mathcal{M}$ from $j$ to $i$ (Note that $p\in e_i(\soc_{A^e}A)e_j$), we have that $L((e_i,p))$ is trivial and $g(e_i,p)=(e_j,p)$. Therefore, there exist a unique arrow $\alpha_p:=L((e_i,p))$ in $Q_E$ from $i$ to $j$ corresponding to the maximal path $p$.
		
		In conclusion, the quiver $Q_E$ is isomorphic to $Q_{T(A)}$.
	\end{proof}

	Recall some basic properties of $T(A)$. The dual basis $\mathcal{B}^\vee = \{\, p^\vee \mid p \in \mathcal{B} \,\}$ forms a $k$-basis of $D(A)$, where for each $p \in \mathcal{B}$, the element $p^\vee \in D(A)$ is defined by $p^\vee(q) = \delta_{p,q}$ for all $q \in \mathcal{B}$.
	
	\begin{lemma}\textnormal{(\cite[Lemma 4.1]{GS})}\label{opera}
		Let $A$ be a finite-dimensional monomial algebra with $k$-basis $\mathcal{B}$ as above. Then, for $p,q,r\in\mathcal{B}$, the following holds in $T(A)$.
		\begin{enumerate}[(1)]
			\item $(p,0)(0,r^\vee)=\left\{
			\begin{array}{*{3}{lll}}
				(0,s^\vee)&,& \text{if there is some $s\in\mathcal{B}$ with $sp=r$}\\
				0& ,& \text{otherwise.}
			\end{array}
			\right.$
			
			\item $(0,r^\vee)(q,0)=\left\{
			\begin{array}{*{3}{lll}}
				(0,s^\vee)&,& \text{if there is some $s\in\mathcal{B}$ with $qs=r$}\\
				0& ,& \text{otherwise.}
			\end{array}
			\right.$
			
			\item $(0,p^\vee)(q,0)(0,r^\vee)=0.$
			
			\item If $prq\in\mathcal{B}$, then $(q,0)(0,(prq)^\vee)(p,0)=(0,r^\vee)$.
		\end{enumerate}
	\end{lemma}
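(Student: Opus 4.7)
The plan is to verify each identity by direct computation, unpacking the multiplication $(a,f)(b,g) = (ab,\, ag + bf)$ in $T(A) = A \oplus D(A)$ together with the bimodule structure $(afb)(x) = f(bxa)$ on $D(A)$. Specializing the latter, the left action is $(ag)(x) = g(xa)$ and the right action is $(bf)(x) = f(bx)$. The only real input from the monomial hypothesis is that $\mathcal{B}$ consists of paths in $Q$ not lying in $I$, so factorizations in $\mathcal{B}$ are essentially unique: given $p, r \in \mathcal{B}$, there is at most one $s \in \mathcal{B}$ with $sp = r$ in $A$ (the unique prefix of $r$ of the appropriate length, provided it is composable with $p$), and similarly from the other side.

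For part (1), I would compute $(p,0)(0,r^\vee) = (0,\, p \cdot r^\vee)$ and evaluate $(p \cdot r^\vee)(q) = r^\vee(qp)$ on $q \in \mathcal{B}$; this equals $1$ precisely when $qp = r$ in $\mathcal{B}$ and $0$ otherwise. By the uniqueness just noted, at most one such $q$ exists, so $p \cdot r^\vee = s^\vee$ when some $s \in \mathcal{B}$ satisfies $sp = r$, and $0$ otherwise. Part (2) is entirely parallel, with the right action yielding $(r^\vee \cdot q)(x) = r^\vee(qx)$, which picks out the unique $s$ with $qs = r$ if it exists.

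Part (3) is immediate from the fact that the product in $T(A)$ of two elements both having vanishing first component is zero, because $(ab,\, ag + bf)$ vanishes identically when $a = b = 0$; and $(0, p^\vee)(q, 0)$ is already of the form $(0, h)$ by (the computation behind) part (2). For part (4), I would apply (1) and then (2) in sequence: since $prq \in \mathcal{B}$ forces its subpaths $pr$ and $r$ to lie in $\mathcal{B}$ as well, part (1) gives $(q, 0)(0, (prq)^\vee) = (0, (pr)^\vee)$, with $s = pr$ the unique solution to $sq = prq$, and then part (2) gives $(0, (pr)^\vee)(p, 0) = (0, r^\vee)$, with $t = r$ the unique solution to $pt = pr$.

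There is no genuine obstacle here: the lemma is essentially a bookkeeping exercise in the definition of $T(A)$. The only subtlety worth highlighting is the use of unique path factorizations within $\mathcal{B}$ in parts (1) and (2), which is precisely where the monomial hypothesis on $A$ enters.
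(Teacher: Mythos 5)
Your proposal is correct. The paper gives no proof of this lemma at all --- it is quoted verbatim from \cite[Lemma 4.1]{GS} --- and your direct unpacking of the multiplication in $T(A)$ and of the bimodule structure $(afb)(x)=f(bxa)$, together with the observation that uniqueness of the factorizations $sp=r$ and $qs=r$ within $\mathcal{B}$ is exactly where the monomial hypothesis enters, is precisely the standard argument one would find in the cited source.
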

	
	\begin{proposition}\textnormal{(\cite[Proposition 4.2]{GS})}\label{basis of TA}
		Let $A$ be a finite-dimensional monomial algebra. Then $T(A)$ is generated by $\{(\alpha,0)\mid\alpha\in Q_1\}\cup\{(0,m^\vee)\mid m\in\mathcal{M}\}$.
	\end{proposition}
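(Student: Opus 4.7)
The plan is to show that the subalgebra $B \subseteq T(A)$ generated by the claimed set (together with the vertex idempotents $(e_i, 0)$, which are built into the quiver algebra presentation of $A$) exhausts $T(A) = A \oplus D(A)$. Since $(A, 0) \cong A = kQ/I$ is generated as a $k$-algebra by the arrows $Q_1$ and vertices $Q_0$, we immediately have $(a, 0) \in B$ for every $a \in A$. It therefore remains to show $(0, f) \in B$ for every $f \in D(A)$, and since $\mathcal{B}^\vee = \{p^\vee : p \in \mathcal{B}\}$ is a $k$-basis of $D(A)$, this reduces to the claim that $(0, p^\vee) \in B$ for every $p \in \mathcal{B}$.

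The key combinatorial step is to prove the following extension claim: for every $p \in \mathcal{B}$, there exist paths $u, v \in \mathcal{B}$ (possibly trivial) with $u p v \in \mathcal{M}$. I would establish this by an iterative extension argument. Starting from $p$, at each step attempt to left-multiply (resp.\ right-multiply) the current path by some arrow so that the resulting longer path still lies in $\mathcal{B}$, i.e.\ is nonzero in $A$. If no such extension is possible on either side, then $\alpha (u p v) = 0$ and $(u p v) \beta = 0$ in $A$ for all arrows $\alpha, \beta$, so $u p v \in \mathcal{M}$ by the definition of maximal path. Since $A$ is finite-dimensional, the lengths of elements in $\mathcal{B}$ are bounded, so the process must terminate, yielding the required maximal path.

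Granted the claim, the conclusion is immediate from Lemma \ref{opera}(4), which gives
$$(v, 0)\,(0, (u p v)^\vee)\,(u, 0) = (0, p^\vee).$$
Both $(u, 0)$ and $(v, 0)$ lie in $B$ by the first paragraph, and $(0, (upv)^\vee) = (0, m^\vee)$ is one of the distinguished generators, so $(0, p^\vee) \in B$, finishing the proof.

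The main obstacle is the extension claim. It uses crucially both the monomial hypothesis (so that $\mathcal{B}$ consists of actual paths and the notion of maximal path is purely combinatorial) and the finite-dimensionality of $A$ (to guarantee termination of the extension procedure). Once the claim is in hand, the rest of the argument is a direct application of Lemma \ref{opera}(4), together with the evident fact that $A \subseteq T(A)$ is generated by its arrows and vertices.
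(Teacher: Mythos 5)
Your argument is correct; note that the paper itself does not prove this proposition but imports it from \cite[Proposition 4.2]{GS}, and your proof (extend each $p\in\mathcal{B}$ to a maximal path $upv\in\mathcal{M}$ using finite-dimensionality and the monomial hypothesis, then recover $(0,p^\vee)$ via Lemma \ref{opera}(4)) is essentially the argument given there. No substantive gaps.
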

	
	We now prove the main result of this section.
	
	\begin{theorem}\label{Trivial extension of mono}
		Let $A=\qa$ be a finite-dimensional monomial algebra, $A_E$ the symmetric $f_s$-BCA of $E_A$, and $T(A)$ the trivial extension of $A$ by $D(A)$. Then $A_E$ is isomorphic to $T(A)$.
	\end{theorem}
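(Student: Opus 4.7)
The plan is to define an algebra homomorphism $\phi: kQ_E \to T(A)$ on vertices and arrows, show it descends to $\bar\phi: A_E \to T(A)$, and then prove the latter is bijective. By Lemma \ref{quiver of AE/TA}, the arrows of $Q_E$ split into two types: those coming from an arrow $\alpha \in Q_1$, and the fake arrows $\alpha_m := L((t(m), m))$ arising from the trivial singleton $L$-classes, one per maximal path $m \in \mathcal{M}$. I set $\phi(e_v) = (e_v, 0)$, $\phi(\alpha) = (\alpha, 0)$ for the first type, and $\phi(\alpha_m) = (0, m^\vee)$ for the second, and extend uniquely to a $k$-algebra homomorphism $kQ_E \to T(A)$.

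The first step is to check $\phi(I_E) = 0$, which by Lemma \ref{relations} reduces to verifying (R1') and (R2'). For a relation $\llseq{e}{d(e)-1-k} - \llseq{h}{d(h)-1-k}$ of type (R1'), write $e = (e_i, q_1)$ and $h = (e_j, q_2)$ with $e_i = e_j = v$. In the only non-trivial range $k < \min(i, j)$, each truncated path factors in $Q_E$ as a forward sub-path of its maximal path, then the fake arrow, then a wrap-around sub-path; applying Lemma \ref{opera}(4), both images become $(0, r^\vee)$, where $r$ is the common $k$-arrow sub-path leading into $v$ that the hypothesis gives, so the difference vanishes. For (R2'), a minimal nonzero path in $Q_E$ missed by every special path is either (a) a path lying entirely in $Q$ that vanishes in $A$, hence a minimal generator of $I$, with image $(p, 0) = 0$ in $T(A)$; or (b) a path containing a fake arrow $\alpha_m$ next to an arrow that is not the matching first/last arrow of $m$, or containing two fake arrows. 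Lemma \ref{opera}(1)--(3) kills the image in all such cases, using that $(0, m^\vee)(p, 0) = 0$ unless $p$ is a left factor of $m$, $(p, 0)(0, m^\vee) = 0$ unless $p$ is a right factor of $m$, and any product of two $D(A)$-components is zero in $T(A)$.

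Once $\bar\phi$ is well-defined, surjectivity is immediate from Proposition \ref{basis of TA}, since the image contains the generators $(\alpha, 0)$ and $(0, m^\vee)$ of $T(A)$. For injectivity I argue by dimension: $\dim T(A) = 2|\mathcal{B}|$, and I claim $A_E$ is spanned by (i) the paths $\pi$ in $Q_E$ for each $\pi \in \mathcal{B}$ (involving no fake arrow), and (ii) for each $r \in \mathcal{B}$, a single chosen element $m_1 \alpha_m m_2$ where $m \in \mathcal{M}$ satisfies $m = m_1 r m_2$. Any path in $Q_E$ with no fake arrow is of type (i); any path with exactly one fake arrow $\alpha_m$ has the form $\pi_1 \alpha_m \pi_2$ where, for nonvanishing in $A_E$, $\pi_1$ must be a right factor and $\pi_2$ a left factor of $m$, giving a middle sub-path $r$ of $m$, and the (R1) relations identify this with the chosen representative attached to $r$; paths with two or more fake arrows are zero in $A_E$. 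Thus $\dim A_E \le 2|\mathcal{B}| = \dim T(A)$, and combined with surjectivity this forces $\bar\phi$ to be an isomorphism.

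The main obstacle I anticipate is the bookkeeping in step (ii): verifying that the (R1) relations in $I_E$ suffice to identify $\pi_1 \alpha_m \pi_2$ with the chosen representative $(m_r)_1 \alpha_{m_r} (m_r)_2$ whenever they share the same middle sub-path $r$, even as we vary among different maximal paths containing $r$. This amounts to matching (R1)-iterations to the change-of-maximal-path operation, which should follow from the same Lemma \ref{opera}(4) computation used for (R1') above (both sides map to $(0, r^\vee)$ in $T(A)$), but it requires combinatorial care to guarantee all needed identifications are actually available inside $A_E$.
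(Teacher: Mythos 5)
Your proposal is correct and follows essentially the same route as the paper's proof: the same assignment $\alpha\mapsto(\alpha,0)$, $\alpha_m\mapsto(0,m^\vee)$, the same verification that the relations of types (R1') and (R2') die in $T(A)$ via Lemma \ref{opera}, surjectivity from Proposition \ref{basis of TA}, and a dimension comparison to conclude. The only (harmless) divergence is in the dimension step: the paper proves $\dim_k A_E=2\dim_k A$ exactly by exhibiting two injections $A\hookrightarrow A_E$ (invoking \cite[Lemma 4.18]{LL} for the second), whereas you establish only the spanning bound $\dim_k A_E\le 2|\mathcal{B}|$, which together with surjectivity already forces the isomorphism; and the identification you worry about in step (ii) does go through in a single application of (R1), since for two maximal paths $m=\pi_2 r\pi_1$ and $m'=\pi_2' r\pi_1'$ the special cycles based at $t(r)$ end in the same $k=l(r)$ arrows (those of $r$), so $\pi_1\alpha_m\pi_2-\pi_1'\alpha_{m'}\pi_2'$ is literally a type 1 relation.
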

	
	\begin{proof}
		By Lemma \ref{quiver of AE/TA}, we can divide arrows in $Q_E$ into two parts. Denote the arrow in $ Q_E$ corresponding to some arrow $\alpha$ in $Q$ by $\alpha$, and the arrow in $Q_E$ corresponding to some arrow induced by some $m\in\mathcal{M}$ by $\alpha_m$.
		
		We first prove $\dim_k A_E=2\dim_k A$. Actually, by definition of the \fsbc $E_A$ and the quiver associated with $E_A$, we can regard the monomial algebra $A=\qa$ as a subalgebra of $A_E$. Therefore, we can embedding the $k$-basis $\mathcal{B}$ of $A$ to nonzero paths in $A_E$ without involving arrows induced by maximal paths in $A$. To be more specific, it is an injection $i_1: A\rightarrow A_E$ of $k$-vector space given by $p\mapsto p$ for all nonzero path $p\in\mathcal{B}$ in $A$. 
		
		Recall each nonzero path in $A_E$ is induced by a standard sequence in $E_A$. 
		Thus, we can define the second map $i_2: A\rightarrow A_E$ of $k$-vector space which is given by $p\mapsto\ ^\wedge p$ (see \cite[Lemma 4.18]{LL} for the notation). By \cite[Lemma 4.18]{LL}, this is also an injection. Moreover, all nonzero paths in $\image i_2$ have a subpath which is an arrow that induced by maximal paths in $A$. Therefore, $\image i_1\cap \image i_2=\emptyset$. Moreover, we prove $\sspan_k(\image i_1\cup \image i_2)=A_E$. It is obviously to find that $\sspan_k(\image i_1\cup \image i_2)\subseteq A_E$. For all non zero path $q$ in $A$, if $q$ does not contain a subarrow induced by some maximal path in $A$, then $q\in\image i_1$. If there exist an arrow induced by some maximal path in $A$ that is a subpath of $q$, then by definition of \fsbca $A_E$, $\wwedge q\in\image i_1$. Therefore, $q=\wwedge\wwedge q\in\image i_2$. To sum up, we have  $\dim_k A_E=2\dim_k A=\dim_k T(A)$.
		 
		We construct a surjection $\psi: kQ_E\rightarrow T(A)$ of $k$-algebras which is given by $\alpha\in Q_1\mapsto(\alpha,0)$, $\alpha_m\mapsto (0,m^\vee)$ with $m\in\mathcal{M}$. It is straightforward to see that it is a surjection by Proposition \ref{basis of TA}. Now we prove it can induce a surjection from $A_E$ to $T(A)$, which means for all relations $\rho\in I_E$, $\psi(\rho)=0$.	
		
		By discussion in Lemma \ref{relations}, $I_E$ can be generated by relations fitting the condition (R1') or the condition (R2').
		
		Let $\rho=\alpha_{n}\cdots\alpha_{1}$ is a relation fitting the condition (R2'). There are two cases to consider as follows.
		
		\textit{Case 1.} If $\rho$ does not contain arrows induced by maximal paths in $A$, then by definition of $E_A$, it is actually a relation in the ideal $I$ in $A$. Then $\psi(\rho)=(\rho,0)=0$ in $T(A)$.
		
		\textit{Case 2.} If $\rho$ contains an arrow $\alpha_i$ induced by a maximal path $m\in\mathcal{M}$ in $A$, then we can write $m=\beta_n\cdots\beta_1$. If $\rho$ have some subarrow which is not in $\{\beta_1,\cdots,\beta_n\}$, by Lemma \ref{opera}, $\psi(\rho)=0$. If all arrows in $\rho$ is in $\{\beta_1,\cdots,\beta_n\}$, then  without loss of generality, we can assume $\rho=\beta_n\cdots\beta_1\alpha_i\beta_n$ or $\rho=\alpha_i\beta_n\cdots\beta_1\alpha_i$. Thus
		$$\psi(\beta_n\cdots\beta_1\alpha_i\beta_n)=(\beta_n\cdots\beta_1,0)(0,(\beta_n\cdots\beta_1)^\vee)(\beta_n,0)=(0,e_{t(\beta_n)}^\vee)(\beta_n,0)=0;$$
		$$\psi(\alpha_i\beta_n\cdots\beta_1\alpha_i)=(0,(\beta_n\cdots\beta_1)^\vee)(\beta_n\cdots\beta_1,0)(0,(\beta_n\cdots\beta_1)^\vee)=(0,e_{t(\beta_n)}^\vee)(0,(\beta_n\cdots\beta_1)^\vee)=0.$$
		
		Let $\rho=\alpha_{n}\cdots\alpha_{1}-\beta_m\cdots\beta_1$ is a relation fitting the condition (R1'). Then there exist a nonzero path $p$ in $A_E$, such that $p\alpha_{n}\cdots\alpha_{1}$ and $p\beta_m\cdots\beta_1$ are special paths in $A_E$. By the proof in Proposition \ref{EA is fsbc}, there are arrows $\alpha_i$ and $\beta_j$ induced by maximal paths $m_1$ and $m_2$ in $A$ which are subarrows of $\alpha_{n}\cdots\alpha_{1}$ and $\beta_m\cdots\beta_1$ respectively. Moreover, $m_1=\alpha_{i-1}\cdots\alpha_{1} p\alpha_{n}\cdots\alpha_{i+1}$ and $m_2=\beta_{j-1}\cdots\beta_{1} p\beta_{m}\cdots\beta_{j+1}$. Therefore, 
		$$\psi(\rho)=(0,p^\vee)-(0,p^\vee)=0.$$
		
		In conclusion, the surjection $\psi$ induces a surjection of $k$-algebras from $A_E$ to $T(A)$. Moreover, since $\dim_k A_E=\dim_k T(A)$, $A_E$ is isomorphic to $T(A)$.
	\end{proof}
	
	In fact, Theorem~\ref{Trivial extension of mono} generalizes \cite[Theorem~1.2]{Sch} and \cite[Theorem~4.3]{GS}. In these two papers, the authors showed that the trivial extensions of gentle algebras are Brauer graph algebras, and that the trivial extensions of almost gentle algebras are Brauer configuration algebras. In particular, gentle and almost gentle algebras are monomial algebras, while Brauer graph algebras and Brauer configuration algebras are symmetric $f_s$-BCAs. 

	We now present several examples of trivial extensions of monomial algebras. These monomial algebras are far from being gentle or almost gentle, as their defining relations are not quadratic.
	
	\begin{example}
		Consider $A=kQ/I$, where $Q$ is given by the following quiver
\[\begin{tikzcd}
	& 2 && 3 \\
	\\
	1 &&&& 4
	\arrow["{\alpha_2}", from=1-2, to=1-4]
	\arrow["{\alpha_3}", from=1-4, to=3-5]
	\arrow["{\alpha_1}", from=3-1, to=1-2]
	\arrow["\beta"', from=3-1, to=3-5]
\end{tikzcd}\]
		and $I=\langle \alpha_3\alpha_2\alpha_1\rangle$.
		
		Then we have $\mathcal{M}=\{p_1:=\alpha_2\alpha_1,p_2:=\alpha_3\alpha_2,\beta\}$. Moreover, consider the $f_s$-BC $E_A$ of $A$:
		$$E=\{(e_1,p_1),(e_2,p_1),(e_3,p_1),(e_2,p_2),(e_3,p_2),(e_4,p_2),(e_1,\beta),(e_4,\beta)\};$$
		$$P((e_1,p_1))=\{(e_1,p_1),(e_1,\beta)\};$$
		$$P((e_2,p_1))=\{(e_2,p_1),(e_2,p_2)\};$$
		$$P((e_3,p_1))=\{(e_3,p_1),(e_3,p_2)\};$$
		$$P((e_4,p_2))=\{(e_4,p_2),(e_4,\beta)\};$$
		$$L((e_2,p_1))=\{(e_2,p_1),(e_2,p_2)\},$$
		and $L(e)=\{e\}$, otherwise. The degree function $d$ on $E_A$ is defined as follows:
		\begin{itemize}
			\item $d((e_1,p_1))=d((e_2,p_1))=d((e_3,p_1))=l(p_1)+1=3$;
			\item $d((e_2,p_2))=d((e_3,p_2))=d((e_4,p_2))=l(p_2)+1=3$;
			\item $d((e_1,\beta))=d((e_4,\beta))=l(\beta)+1=2$.
		\end{itemize}
		Thus the quiver correspond to $E_A$ is given by:
\[\begin{tikzcd}
	& 2 && 3 \\
	\\
	1 &&&& 4
	\arrow["{\alpha_2}", from=1-2, to=1-4]
	\arrow["{(\alpha_2\alpha_1)^{\vee}}"{description, pos=0.6}, from=1-4, to=3-1]
	\arrow["{\alpha_3}", from=1-4, to=3-5]
	\arrow["{\alpha_1}", from=3-1, to=1-2]
	\arrow["\beta"', from=3-1, to=3-5]
	\arrow["{(\alpha_3\alpha_2)^{\vee}}"{description, pos=0.4}, from=3-5, to=1-2]
	\arrow["{\beta^{\vee}}", shift left=3, curve={height=-12pt}, from=3-5, to=3-1]
\end{tikzcd}\]
		It is easy to check that $\dim_k A_E=2\dim_k A=20$ and $T(A)\cong A_E$.
		
	\end{example}
	
		\begin{example}\label{ex:tri-ext-quotient-x^3}
		Consider $A=k[x]/\langle x^3\rangle$. The unique maximal path in $A$ is given by $x^2$. Let $x^2=(e_1\rightarrow e_2\rightarrow e_3)$ with $e_1=e_2=e_3=1$ in $A$. Therefore, consider the $f_s$-BC $E_A$ of $A$:
		$$E=\{(e_1,x^2),(e_2,x^2),(e_3,x^2)\};$$
		$$P((e_1,x^2))=E;$$
		$$L((e_1,x^2))=\{(e_1,x^2),(e_2,x^2)\};$$
		$$L((e_3,x^2))=\{(e_3,x^2)\}.$$
	 	The degree function on $E_A$ is equal to $3$ for all angles in $E$.
		Thus the quiver correspond to $E_A$ is given by:
		$$
		\begin{tikzcd}
			\bullet \arrow["(x^2)^\vee=:y"', loop, distance=2em, in=215, out=145] \arrow["x"', loop, distance=2em, in=35, out=325]
		\end{tikzcd}$$
		Actually, $A_E\cong k[x,y]/\langle x^3,y^2\rangle$.	This is the $f_s$-BCA in Example \ref{ex-alg:2-kx/x^3}.
		It is easy to check that $\dim_k A_E=2\dim_k A=6$ and $T(A)\cong A_E$.
		
	\end{example}
	
	It was shown in \cite{PS, Rin, JS} that $A$ is gentle if and only if $T(A)$ is special biserial. In this case, $T(A)$ is a Brauer graph algebra by \cite[Theorem~1.1]{Sch}. In \cite[Question~4.5]{GS}, Green and Schroll posed the open question of whether an algebra $A$ must be almost gentle if $T(A)$ is a Brauer configuration algebra---a question that remains unsolved. 

	In our setting, however, we provide an example showing that $T(A)$ being a symmetric \fsbca does not necessarily imply that $A$ is a monomial algebra, as demonstrated below.

\begin{example}\label{special cut}
		Consider $A=kQ_A/I_A$ and $B=Q_B/I_B$ with $Q_A$ and $Q_B$ are given by the following quivers:
		\[\begin{tikzcd}
			&& 1 &&&&& 1 \\
			{Q_A: } & 2 && 3 && {Q_B:} & 2 && 3 \\
			&& 4 &&&&& 4
			\arrow["{\alpha_1}"', from=1-3, to=2-2]
			\arrow["{\beta_1}", from=1-3, to=2-4]
			\arrow["{\alpha_1}"', from=1-8, to=2-7]
			\arrow["{\beta_1}", from=1-8, to=2-9]
			\arrow["{\alpha_2}"', from=2-2, to=3-3]
			\arrow["{\beta_2}", from=2-4, to=3-3]
			\arrow["{\alpha_2}"', from=2-7, to=3-8]
			\arrow["{\beta_2}", from=2-9, to=3-8]
			\arrow["\gamma"{description}, from=3-8, to=1-8]
		\end{tikzcd}\]
		and $I_A=\langle \alpha_2\alpha_{1}-\beta_2\beta_{1}\rangle$, $I_B=\langle \alpha_2\alpha_{1}-\beta_2\beta_{1}, \beta_{1}\gamma\alpha_2, \alpha_{1}\gamma\beta_2, \gamma\alpha_2\alpha_{1}\gamma\rangle$. By \cite[Theorem 3.9]{FP}, $B$ is the trivial extension of $A$. At the same time, $B$ is a symmetric $f_s$-BCA as shown in Example \ref{ex-alg:1-LBGA}.		
	\end{example}
	
		It was shown in \cite[Corollary~7.15]{LL} that the class of finite-dimensional, representation-finite $f_s$-BCAs is closed under derived equivalence. The result in this section provides a potential counterexample in the representation-infinite case.

	\begin{example}
		\begin{tikzcd}
			&& \bullet &&&&& \bullet \\
			{Q:} & \bullet & \bullet & \bullet && {Q_{T(A)}:} & \bullet & \bullet & \bullet \\
			&& \bullet &&&&& \bullet
			\arrow["{\alpha_1}"', from=1-3, to=2-2]
			\arrow["{\gamma_1}"', from=1-3, to=2-3]
			\arrow["{\epsilon_1}", from=1-3, to=2-4]
			\arrow["{\alpha_1}"', from=1-8, to=2-7]
			\arrow["{\gamma_1}"', from=1-8, to=2-8]
			\arrow["{\epsilon_1}", from=1-8, to=2-9]
			\arrow["{\alpha_2}"', from=2-2, to=3-3]
			\arrow["{\gamma_2}"', from=2-3, to=3-3]
			\arrow["{\epsilon_2}", from=2-4, to=3-3]
			\arrow["{\alpha_2}"', from=2-7, to=3-8]
			\arrow["{\gamma_2}"', from=2-8, to=3-8]
			\arrow["{\epsilon_2}", from=2-9, to=3-8]
			\arrow["{\beta_2}"', curve={height=18pt}, from=3-8, to=1-8]
			\arrow["{\beta_1}", curve={height=-18pt}, from=3-8, to=1-8]
		\end{tikzcd}

\noindent Consider the canonical algebra $A$ of type $(2,2,2)$ with the quiver given by $Q$. To be more specific, $A=kQ/\langle\alpha_2\alpha_{1}+\gamma_2\gamma_1+\epsilon_2\epsilon_1\rangle$. Then the quiver of the trivial extension of $A$ can be given by $Q_{T(A)}$ which has a specific description in \cite[Example 2.5]{FSTTV}. Actually, by using \cite[Theorem 1.1]{FSTTV}, $T(A)=kQ_{T(A)}/I$ with $I$ generated by following relations:
		
		\begin{itemize}
			\item $\alpha_2\alpha_{1}+\gamma_2\gamma_1+\epsilon_2\epsilon_1$;
			
			\item $\beta_2\gamma_2$, $\gamma_1\beta_2$, $\beta_{1}\epsilon_2$, $\epsilon_1\beta_{1}$, $\gamma_1\beta_1\alpha_2$, $\alpha_{1}\beta_1\gamma_2$, $\epsilon_1\beta_2\gamma_2$, $\alpha_{1}\beta_2\epsilon_2$;
			
			\item $\gamma_1\beta_{1}\gamma_2\gamma_1$, $\gamma_2\gamma_1\beta_{1}\gamma_2$, $\beta_{1}\gamma_2\gamma_1\beta_{1}$,  $\epsilon_1\beta_2\epsilon_2\epsilon_1$, $\epsilon_2\epsilon_1\beta_2\epsilon_2$, $\beta_2\epsilon_2\epsilon_1\beta_2$;
			
			\item $\alpha_{1}\beta_{i}\alpha_2\alpha_{1}$, $\alpha_2\alpha_{1}\beta_{i}\alpha_2$, $\beta_{i}\alpha_2\alpha_{1}\beta_{i}$, with $i=1,2$;
			
			\item $\beta_{1}\gamma_2\gamma_1-\beta_2\epsilon_2\epsilon_1$, $\gamma_2\gamma_1\beta_{1}-\epsilon_2\epsilon_1\beta_2$, $\beta_{1}\gamma_2\gamma_1+\beta_2\alpha_2\alpha_{1}$, $\gamma_2\gamma_1\beta_{1}+\alpha_2\alpha_{1}\beta_2$, $\beta_{1}\alpha_2-\beta_2\alpha_2$, $\alpha_{1}\beta_{1}-\alpha_{1}\beta_2$.
		\end{itemize}

		It is well-known that $A$ is derived equivalent to the path algebra $B$ of type $\widetilde{D_4}$; see, for example, in \cite[Theorem~3.5]{Le}. Then $T(A)$ and $T(B)$ are also derived equivalent by \cite[Theorem~3.1]{Ric1}. By Theorem~\ref{Trivial extension of mono}, $T(B)$ is a symmetric $f_s$-BCA. It would be interesting to determine whether the above algebra $T(A)$ is isomorphic to some $f_s$-BCA. However, this seems unlikely, since the defining ideal $I$ contains the relation $\alpha_2\alpha_{1}+\gamma_2\gamma_1+\epsilon_2\epsilon_1$ together with several anti-commutative relations.
	\end{example}

	\section{Admissible cuts on symmetric $f_s$-BCAs}

	One way to construct new algebras by deleting arrows from quivers is through the notion of admissible cuts of finite-dimensional algebras, a concept studied for instance in \cite{EF, FP2, GS, Sch}. The aim of this section is to generalize the main result of \cite{GS} from Brauer configuration algebras to symmetric $f_s$-BCAs.

	Let $\Lambda = kQ_\Lambda / I_\Lambda$ be a symmetric $f_s$-BCA with a free f-degree function. Assume that the associated angle set $E$ is finite, which is equivalent to $\Lambda$ being finite-dimensional. Without loss of generality, we may suppose that $I_\Lambda$ is admissible, since \cite[Definition~6.1]{LL} provides a construction of presentations of $\Lambda$ with admissible ideals. 

	All special paths (that is, the paths corresponding to $\lseq{e}$ in the $f_s$-BC $E$) are cycles in $Q_\Lambda$, since the associated $f_s$-BC is symmetric. We refer to these cycles as {\it special cycles}. This notion corresponds to the special cycles in Brauer configuration algebras~\cite{GS2}. Denote by $\mathcal{S}$ the set of all special cycles, considered up to cyclic permutation.

	\begin{definition}\textnormal{(Compare~to~\cite[Definition 3.2]{FP2}~and~\cite[Definition 5.1]{GS})} Let $\Lambda = kQ_\Lambda / I_\Lambda$ be a finite-dimensional symmetric $f_s$-BCA with a free f-degree function, and let $\mathcal{S}=\{C_1, \dots, C_t\}$ be a set of representatives of the equivalence classes of special cycles under cyclic permutation.
		\begin{enumerate}
			\item A cutting set $D$ of $Q_\Lambda$ is a subset of arrows in $Q_\Lambda$ containing exactly one arrow from each special cycle corresponding to a representative $C_i$ of the equivalence classes, for $i = 1, \dots, t$. 
			\item We define the cut algebra associated with $D$ as 
\[
kQ_\Lambda / \langle I_\Lambda \cup D \rangle,
\]
where $\langle I_\Lambda \cup D \rangle$ denotes the ideal generated by $I_\Lambda \cup D$.
\item Furthermore, we call a cutting set $D$ an admissible cut if it satisfies the following conditions:
\begin{enumerate}[(c1)]
    \item $D$ consists of $t$ distinct arrows in $Q_\Lambda$;
    \item each arrow in $D$ appears in any special cycle $C_i$ at most once.
\end{enumerate}
		\end{enumerate}
	\end{definition}

	\begin{remark}
		In fact, this definition is a generalization of the notion of an admissible cut given in \cite[Section~4]{Sch} for Brauer graph algebras and in \cite[Definition~5.1]{GS2} for Brauer configuration algebras. Note that our definition of a cutting set actually coincides with their notion of an admissible cut, since for Brauer graph algebras and Brauer configuration algebras a cutting set automatically satisfies conditions~(c1) and~(c2).
The following examples illustrate that these conditions are indeed necessary:

(1) \textbf{Condition (c1) is necessary.} 
Consider the $f_s$-BCA $B$ in Example~\ref{special cut}, which has two distinct special cycles $\{\gamma\alpha_2\alpha_1,\gamma\beta_2\beta_1\}$ up to cyclic permutation. 
If we take the cutting set $D = \{\gamma\}$ consisting of only one arrow, the corresponding cut algebra is the algebra $A$ in Example~\ref{special cut}. 
This algebra is not monomial, and thus lies outside the scope of our discussion.

(2) \textbf{Condition (c2) is necessary.} 
In this section, we aim to establish a one-to-one correspondence (see Corollary~\ref{1-1 cor}) between monomial algebras 
and the cut algebras of symmetric $f_s$-BCAs with free f-degree functions.
However, some $f_s$-BCAs with free f-degree functions cannot be realized as the trivial extension of their cut algebras. For instance, consider the $f_s$-BCA $$A\cong k\langle x,y\rangle/\langle x^2y-yx^2,xy^2-y^2x,xyx,yxy,x^3,y^3\rangle$$ in Example~\ref{ex-alg:3-fBCA-without-admissible-cut}. 
For any choice of a single arrow as a cutting set, the corresponding cut algebra is isomorphic to the algebra $B=k[x]/\langle x^3\rangle$.
From Example~\ref{ex:tri-ext-quotient-x^3} we know that the trivial extension of $B$ is the $f_s$-BCA in Example~\ref{ex-alg:2-kx/x^3}, which has a different dimension from $A$. In fact, by condition~(c2), this $f_s$-BCA $A$ admits no admissible cut, and therefore does not appear in the correspondence established in Corollary~\ref{1-1 cor}.
	\end{remark}
	
	We now show that condition (c1) ensures that the cut algebra is a monomial algebra.
	
	\begin{proposition}\label{prop:cut-monomial}
		Let $\Lambda = kQ_\Lambda / I_\Lambda$ be a symmetric $f_s$-BCA with a free f-degree function, and let $D$ be a cutting set of $Q_\Lambda$ satisfying condition~(c1). Define a quiver $Q$ by setting $Q_0 = (Q_\Lambda)_0$ and $Q_1 = (Q_\Lambda)_1 \setminus D$. Then the cut algebra $kQ_\Lambda / \langle I_\Lambda \cup D \rangle$ associated with $D$ is isomorphic to $kQ / \langle I_\Lambda \cap kQ \rangle$. Moreover, the algebra $kQ / \langle I_\Lambda \cap kQ \rangle$ is monomial.
	\end{proposition}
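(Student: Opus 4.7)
The plan is to factor the situation through the natural $k$-algebra projection $\pi\colon kQ_\Lambda\to kQ$ which sends each arrow in $D$ to zero and fixes arrows in $Q_1=(Q_\Lambda)_1\setminus D$. This $\pi$ is a surjective algebra homomorphism with kernel the two-sided ideal $\langle D\rangle$ of $kQ_\Lambda$, and the standard isomorphism theorem gives
\[
kQ_\Lambda/\langle I_\Lambda\cup D\rangle \;\cong\; kQ/\pi(I_\Lambda).
\]
Hence the first half of the proposition reduces to proving the ideal equality $\pi(I_\Lambda)=\langle I_\Lambda\cap kQ\rangle$ inside $kQ$. The inclusion $\langle I_\Lambda\cap kQ\rangle\subseteq\pi(I_\Lambda)$ is immediate because $\pi$ restricts to the identity on $kQ$.

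For the reverse inclusion it suffices to check that $\pi(\rho)\in\langle I_\Lambda\cap kQ\rangle$ on a generating family of $I_\Lambda$, and for this I would use the explicit generators recalled in Section~2.1, namely the monomial relations of type (R2)/(R3) and the binomial relations of type (R1'). For a monomial generator $\rho$: either $\rho\in kQ$ and then $\pi(\rho)=\rho\in I_\Lambda\cap kQ$, or $\rho$ uses an arrow of $D$ and $\pi(\rho)=0$. For a (R1')-generator $\rho=p-q$, with a common completion $\tau$ so that $\tau p,\tau q$ are special paths starting at the same vertex, I would split into three cases: both $p,q\in kQ$ (then $\rho\in I_\Lambda\cap kQ$ directly); both contain $D$-arrows (then $\pi(\rho)=0$); or exactly one side lies in $kQ$. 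The mixed case is the technical heart of the proof: if say $p\in kQ$ and $q\notin kQ$, then because $p$ contains no $D$-arrow the chosen $D$-arrow of the equivalence class $[\tau p]$ must lie in $\tau$, and I would use admissibility ($|D|=t$ so distinct classes receive distinct arrows) together with the structural condition (sf7) established in Lemma~\ref{f7'} to show that $p$ is in fact killed in the cut algebra by a monomial (R2)/(R3) consequence coming from extending $p$ past $\tau$ and picking up the $D$-arrow. Concretely, one chases through the standard-sequence formalism to convert the forced presence of the $D$-arrow of $[\tau p]$ inside $\tau$ into an explicit element of $I_\Lambda\cap kQ$ that equals $p$ modulo monomial relations. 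This is the main obstacle, and it is where the admissibility of the cut is essential.

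Once $\pi(I_\Lambda)=\langle I_\Lambda\cap kQ\rangle$ is in hand the isomorphism follows. For the monomiality assertion I would revisit the same case analysis to show that every binomial contribution of $I_\Lambda\cap kQ$ is redundant: for a (R1')-generator $p-q$ with both $p,q\in kQ$, the tail $\tau$ must contain the chosen $D$-arrows of both classes $[\tau p]$ and $[\tau q]$, which via (R3) forces both $p$ and $q$ individually to be consequences of monomial relations in $kQ$ after passing to the cut algebra, so the binomial $p-q$ is redundant modulo the monomial generators. Combined with the fact that (R2)/(R3)-relations lying in $kQ$ are already monomials, this exhibits a monomial presentation of $kQ/\langle I_\Lambda\cap kQ\rangle$ and completes the proof. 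The hardest step throughout is the mixed (R1') case, where the admissibility hypothesis and the structural condition (sf7) must be combined carefully to convert binomial identifications into monomial ones.
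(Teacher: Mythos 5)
Your overall skeleton matches the paper's: the paper likewise uses the map induced by the inclusion of quivers, notes surjectivity, and invokes the first isomorphism theorem (indeed the paper is far terser than you are, leaving the kernel identification $\pi(I_\Lambda)=\langle I_\Lambda\cap kQ\rangle$ essentially implicit, so your explicit reduction is welcome). The genuine problem is in how you propose to close the two binomial cases. For a type-(R1') generator $p-q$ with common completion $\tau$, you handle the ``mixed'' case and the ``both in $kQ$'' case by claiming that a $D$-free side such as $p$ is ``killed in the cut algebra by a monomial (R2)/(R3) consequence''. This cannot be made to work: since the generators of $\langle I_\Lambda\cap kQ\rangle$ lie in $I_\Lambda$ and $I_\Lambda$ is an ideal of $kQ_\Lambda$, we have $\langle I_\Lambda\cap kQ\rangle\subseteq I_\Lambda$, whereas $p$ (being $L$ of a standard sequence) is a \emph{nonzero} path of $\Lambda$; hence $p\notin\langle I_\Lambda\cap kQ\rangle$ and $p$ is not zero in the cut algebra. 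If either of your two cases actually occurred with $p\neq q$, the ideal $\langle I_\Lambda\cap kQ\rangle$ would contain $p-q$ but neither $p$ nor $q$, and the cut algebra would fail to be monomial; your argument would not repair this.

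What actually saves the statement is that both cases are vacuous, and this is precisely where admissibility enters. Since $|D|=t$ and every special cycle contains exactly one arrow of $D$, assigning to each cyclic class its $D$-arrow is a bijection. If $[\tau p]\neq[\tau q]$, the $D$-arrow of $[\tau p]$ cannot lie in the common factor $\tau$ (it would then also be the unique $D$-arrow of $[\tau q]$, contradicting injectivity), so it lies in $p$, and symmetrically for $q$: both sides are sent to $0$ by $\pi$. If $[\tau p]=[\tau q]$, the two special cycles are cyclic rotations of one another and hence carry the same multiset of arrows; cancelling the common $\tau$ shows $p$ and $q$ have the same multiset of arrows, so either both or neither contain the class's $D$-arrow, and a short rotation argument (the set of positions where the $D$-arrow occurs is invariant under the rotation, so confining it to $\tau$ forces $p=q$) excludes ``neither'' when $p\neq q$. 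This is the content behind the paper's one-line assertion that ``all the relations of type 1 in $I_\Lambda$ do not intersect with $kQ$''. With it, every type-1 generator maps to $0$ under $\pi$, so $\pi(I_\Lambda)$ is generated by the monomial generators lying in $kQ$, which gives both the kernel identification and monomiality at once. Your reduction and your easy cases are fine; the treatment of the hard cases should be replaced by this vacuity argument, and condition (sf7) is not needed.
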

	
	\begin{proof}
		The inclusion of quivers $Q \subset Q_\Lambda$ induces a $k$-algebra homomorphism  
\[
f : kQ \longrightarrow kQ_\Lambda / \langle I_\Lambda \cup D \rangle.
\]
The map $f$ is surjective, since every nonzero element in $kQ_\Lambda / \langle I_\Lambda \cup D \rangle$ is a linear combination of paths that do not contain any arrow from $D$. Hence, for each such path $p'$ in $Q_\Lambda$, there exists a corresponding path $p$ in $Q$ such that $f(p) = p'$. By the First Isomorphism Theorem, we obtain
\[
kQ_\Lambda / \langle I_\Lambda \cup D \rangle \cong kQ / \langle I_\Lambda \cap kQ \rangle.
\]
Furthermore, the algebra $kQ / \langle I_\Lambda \cap kQ \rangle$ is monomial, since every path appearing in a relation of type~1 in $I_\Lambda$ contains an arrow from $D$; equivalently, no such relation lies entirely in $kQ$.
	\end{proof}
	
	The following result shows that if we start with a monomial algebra and take an appropriate admissible cut in its trivial extension, then the original monomial algebra is isomorphic to the corresponding cut algebra.

	\begin{theorem}
		Let $A = kQ / I$ be a monomial algebra with the set of maximal paths $\mathcal{M}$, and let $T(A) = kQ_{T(A)} / I_{T(A)}$ be its trivial extension. Denote by $D = \{\alpha_m \mid m \in \mathcal{M}\}$ the set of new arrows in $Q_{T(A)}$. Then $D$ is an admissible cut of $Q_{T(A)}$, and the cut algebra associated with $D$ is isomorphic to $A$.

	\end{theorem}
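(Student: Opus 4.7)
The plan is to leverage the identification $T(A)\cong A_E$ established in Theorem \ref{Trivial extension of mono}, together with the description of the quiver $Q_{T(A)}\cong Q_E$ given in Lemma \ref{quiver of AE/TA}, so that $D$ becomes a concrete subset of $(Q_E)_1$ that can be analyzed using the combinatorics of the $f_s$-BC $E_A$.

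First I would verify that $D$ is an admissible cut. By Lemma \ref{quiver of AE/TA} the arrow set $(Q_{T(A)})_1$ decomposes as a disjoint union of the ``old'' arrows of $Q_A$ and the ``new'' arrows $D=\{\alpha_m\mid m\in\mathcal{M}\}$, where $\alpha_m=L((t(m),m))$ and $L((t(m),m))=\{(t(m),m)\}$ is a trivial $L$-class. Since $E_A$ is symmetric with trivial degree function, each $\langle g\rangle$-orbit of $E_A$ corresponds to a unique maximal path $m\in\mathcal{M}$ and yields a unique special cycle in $Q_E$ up to cyclic permutation; concretely the special cycle associated with $m=\beta_n\cdots\beta_1$ is $\alpha_m\beta_n\cdots\beta_1$. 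Because the $L$-class $\{(t(m),m)\}$ is a singleton, the arrow $\alpha_m$ can appear in at most one $\langle g\rangle$-orbit and hence in exactly one special cycle (up to cyclic permutation), namely the one coming from $m$. Thus the map $m\mapsto\alpha_m$ is a bijection between $\mathcal{M}$ and $D$, and selects exactly one arrow from each of the $t=|\mathcal{M}|$ equivalence classes of special cycles. Hence $|D|=t$ and $D$ is an admissible cut.

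Next I would identify the cut algebra with $A$. By the previous proposition, the cut algebra $kQ_{T(A)}/\langle I_{T(A)}\cup D\rangle$ is isomorphic to $kQ/\langle I_{T(A)}\cap kQ\rangle$, where $Q$ is obtained from $Q_{T(A)}$ by deleting $D$. By construction $Q=Q_A$, so it remains to show $I_{T(A)}\cap kQ_A=I$. For the inclusion $\supseteq$, any minimal generator $\rho$ of the monomial ideal $I$ is a nonzero path in $Q_A$ that becomes zero in $A$, hence every proper subpath of $\rho$ is a subpath of some maximal path in $\mathcal{M}$, and hence of some special path in $Q_E$ (not using any $\alpha_m$), while $\rho$ itself is not; this matches exactly the description (R2') of a type 2 relation in $I_{T(A)}$. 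For the reverse inclusion I would use the embedding $i_1\colon A\hookrightarrow A_E$ constructed in the proof of Theorem \ref{Trivial extension of mono}: since the composition $kQ_A\hookrightarrow kQ_{T(A)}\twoheadrightarrow A_E$ has image $i_1(A)\cong A$ and therefore kernel equal to $I$, the intersection $I_{T(A)}\cap kQ_A$ equals $I$.

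The main obstacle is the containment $I_{T(A)}\cap kQ_A\subseteq I$, because a priori an element of $kQ_A$ might become zero in $T(A)$ through a more subtle mechanism (for instance via $kQ_{T(A)}$-linear combinations of type 1 relations that happen to cancel all new arrows). The clean way around this is precisely to invoke that $A$ sits as a subalgebra of $T(A)$ via $i_1$, as used above; equivalently, one observes that the type 1 relations (R1') of $E_A$ always involve some $\alpha_m$ (every full special path ends with some $\alpha_m$, so any binomial type 1 relation of $E_A$ contains a new arrow in each of its two monomials), so after intersection with $kQ_A$ only the type 2 monomial relations survive, and these are exactly the generating paths of $I$. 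Combining this with the first paragraph yields both claims of the theorem.
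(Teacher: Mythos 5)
Your proof is correct and takes essentially the same route as the paper: both identify $T(A)$ with the $f_s$-BCA $A_E$ via Theorem \ref{Trivial extension of mono}, both obtain admissibility from the fact that the trivial $L$-class $L((t(m),m))$ forces the new arrow $\alpha_m$ to occur in exactly one special cycle, and for the second claim the paper simply points back to the proof of Theorem \ref{Trivial extension of mono}, so your explicit reduction to $I_{T(A)}\cap kQ_A=I$ (with the inclusion $\subseteq$ settled by the injectivity of $i_1$) is a welcome filling-in of details the paper leaves implicit. One minor slip in your parenthetical alternative: it is not true that every full special path of $E_A$ ends with some $\alpha_m$ (only the rotation starting at the source of $m$ does; the other rotations contain $\alpha_m$ in their interior), but the correct weaker statement that each special cycle contains exactly one new arrow still yields the conclusion you want there, and your main argument via $i_1$ does not depend on this remark.
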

	
	\begin{proof}
		By Theorem \ref{Trivial extension of mono}, we can view $T(A)$ as an $f_s$-BCA with $\mathcal{S} = \{C_1, \ldots, C_t\}$. Moreover, each special cycle in $T(A)$ corresponds uniquely to a maximal path in $\mathcal{M}$. From the construction of $T(A)$, we see that each special cycle $C_i$ (for $i = 1, \ldots, t$) contains exactly one arrow from the set $D$.  Hence, $D$ forms an admissible cut of $T(A)$. Furthermore, by the proof of Theorem \ref{Trivial extension of mono}, the cut algebra associated with $D$ is isomorphic to $A$.
	\end{proof}
	
	The next result shows that if one starts with a symmetric $f_s$-BCA with a free f-degree function and an admissible cut $D$ on its quiver, then the trivial extension of the cut algebra associated with $D$ is isomorphic to the original symmetric $f_s$-BCA.

	\begin{theorem}
		Let $\Lambda = kQ_\Lambda / I_\Lambda$ be a symmetric $f_s$-BCA with a free f-degree function, and let $D$ be an admissible cut of $Q_\Lambda$. Denote by $A = kQ / I$ the cut algebra associated with $D$. Then the trivial extension $T(A)$ is isomorphic to $\Lambda$.
	\end{theorem}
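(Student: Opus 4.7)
My strategy is to combine Theorem~\ref{Trivial extension of mono} with a direct comparison of $f_s$-BCs. By Theorem~\ref{Trivial extension of mono}, $T(A)$ is isomorphic to $A_{E_A}$, the symmetric $f_s$-BCA associated with the $f_s$-BC $E_A$ built from the monomial algebra $A$ in Section~3. Thus it suffices to exhibit an isomorphism of $f_s$-BCs $E_A \cong E$, where $E$ is the underlying $f_s$-BC of $\Lambda$; this will induce $A_{E_A} \cong A_E = \Lambda$ and hence $T(A) \cong \Lambda$.

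The first step is to biject the maximal paths of $A$ with the special cycle representatives $C_1,\ldots,C_t$ of $\Lambda$. I will first unpack admissibility of $D$: the conditions ``exactly one arrow in each $C_i$'' together with $|D|=t$ force each chosen arrow $\alpha_i \in D\cap C_i$ to have singleton $L$-class in $E$. Indeed, an arrow whose $L$-class has size $k$ appears in $k$ distinct special cycles (one per angle in its $L$-class), so if some $\alpha_i$ had a multi-element $L$-class, the ``picks'' for two overlapping cycles would coincide, reducing $|D|$ strictly below $t$. Removing $\alpha_i$ from the cyclic path $C_i$ then produces a single linear path $m_i$ of length $\ell(C_i)-1$ whose arrows all lie in $Q = Q_\Lambda\setminus D$. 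I verify that $m_i$ is maximal in $A$: any extension of $m_i$ by an arrow of $Q$ would give a path of length $\ell(C_i)$ at the same source in $\Lambda$, but by the $f_s$-BC structure of $\Lambda$ the only such path is the full cycle $C_i$, forcing the extending arrow to be $\alpha_i\in D$, a contradiction. Conversely, every maximal path of $A$ is a nonzero path of $\Lambda$ avoiding $D$, hence a subpath of some $C_j$, and maximality forces it to be precisely $m_j$.

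With this bijection in hand, I define the $f_s$-BC isomorphism $\phi:E_A\to E$ on angles as follows. For each $i$, let $v_i\subset E$ be the orbit corresponding to $C_i$, and let $h_i\in v_i$ be the unique angle with $L(h_i)=\{h_i\}$ that generates $\alpha_i$. Send $(e_j,m_i)\in E_A$ to the $j$-th angle in $v_i$ starting after $h_i$ under the $g$-action, anchored so that the terminal angle $(t(m_i),m_i)$ of $E_A$ maps to $h_i$. Routine checking then shows that $\phi$ preserves the $G$-action (cyclic shift on both sides), the trivial degree function, the polygon partition $P$ (matching vertices of $Q$ via Lemma~\ref{quiver of AE/TA}), and the arrow partition $L$ (matched by outgoing $Q$-arrows for non-terminal angles, and by the singleton classes of $D$-arrows for terminal angles), directly from the construction of $E_A$ in Section~3.

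\textbf{Main obstacle.} The crux is extracting from admissibility the sharp structural consequence that each arrow of $D$ has trivial $L$-class in $E$, and hence lies in a unique special cycle. Without this, a shared $D$-arrow would cut multiple cycles and inflate the number of maximal paths of $A$ beyond $t$, breaking the bijection and the dimension count. Once this property is clean, the remaining structural verifications for $\phi$ are straightforward from the constructions in Sections~2 and~3.
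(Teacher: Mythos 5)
Your overall route is the same as the paper's: open each special cycle $C_i=p\alpha_i q$ at its unique $D$-arrow, show that $qp$ is a maximal path of the cut algebra $A$ and that every maximal path of $A$ arises this way, and then conclude via the construction of $E_A$ and Theorem~\ref{Trivial extension of mono}. The paper compresses the final identification into one sentence (``by the definition of the $f_s$-BC associated with $A$ and Theorem~\ref{Trivial extension of mono}\dots''), whereas you spell out the $f_s$-BC isomorphism $E_A\cong E$; that extra detail is exactly what invoking Theorem~\ref{Trivial extension of mono} requires, and your verification of maximality of the $m_i$ matches the paper's argument.

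The gap is in your key structural claim that every arrow of $D$ has a singleton $L$-class. You argue that an arrow whose $L$-class has size $k$ lies on $k$ \emph{distinct} special cycles, so the one-arrow-per-cycle condition together with $|D|=t$ would be violated. But the $k$ special cycles attached to the $k$ angles of an $L$-class need not be distinct up to cyclic permutation: if two angles of the class lie in the same $\langle g\rangle$-orbit, the arrow occurs \emph{twice within a single special cycle} and no second cycle is created, so your counting argument does not apply. Concretely, for $\Lambda=k[x,y]/\langle x^3,y^2\rangle$ (the example $A=k[x]/\langle x^3\rangle$ in Section 4) there is one special cycle $yxx$ and $t=1$; the set $D=\{x\}$ meets that cycle in exactly one arrow and has $|D|=1=t$, yet $x$ has $L$-class of size $2$, the cut algebra is $k[y]/\langle y^2\rangle$, and $T(k[y]/\langle y^2\rangle)$ has dimension $4\neq 6=\dim_k\Lambda$. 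So the intermediate claim is false under the literal reading of the definition of admissible cut, and in this borderline case the statement itself breaks down. (The paper's own proof makes the same silent assumption when it writes $C=p\alpha q$ and asserts $qp\notin I_\Lambda\cap kQ$; if $\alpha$ occurred twice in $C$ then $qp$ would still contain the cut arrow.) Your instinct that triviality of the $L$-classes of the $D$-arrows is the crux is right; what is missing is either a correct derivation that also rules out the same-orbit case, or an explicit strengthening of the hypothesis on $D$ (each cut arrow occurs exactly once in its special cycle).
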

	
	\begin{proof}
		By Proposition \ref{prop:cut-monomial}, it is clear that $I$ is generated by paths. In fact, the special cycles in $Q_\Lambda$ are of the form $C = p\alpha q$ for some $\alpha \in D$. Since $\Lambda$ is symmetric and $C$ is a special cycle, both $qp\alpha$ and $\alpha qp$ are also special cycles. Hence $qp \notin I_\Lambda$, and consequently $qp \notin I_\Lambda \cap kQ$. 
Because $I_\Lambda$ is admissible and $\Lambda$ is an $f_s$-BCA, if there exist arrows $\beta$ and $\gamma$ in $Q_\Lambda$ such that $\beta qp$ and $qp\gamma$ are nonzero in $\Lambda$, then necessarily $\beta = \gamma = \alpha$. This implies that for every arrow $\beta'$ in $Q$, we have $\beta' qp = qp \beta' = 0$ in $A$. Therefore, $qp$ is a maximal path in $A$. 
By the definition of the $f_s$-BCA associated with $A$ and by Theorem~\ref{Trivial extension of mono}, we conclude that $T(A) \cong \Lambda$.
	\end{proof}

	Consider the set of pairs $(\Lambda, D)$ where $\Lambda = kQ_\Lambda / I_\Lambda$ is a symmetric $f_s$-BCA with a free f-degree function and $D$ is an admissible cut of $Q_\Lambda$. We say that two pairs $(\Lambda, D)$ and $(\Lambda', D')$ are equivalent if there exists a $k$-algebra isomorphism from $\Lambda$ to $\Lambda'$ that sends $D$ to $D'$. Denote by $\mathcal{Y}$ the set of equivalence classes under this relation. It is clear that all cut algebras corresponding to pairs in the same equivalence class are isomorphic. By combining the previous two theorems, we obtain the following main result of this section.
	
	\begin{corollary}\label{1-1 cor}
		There is a bijection
\[
\phi : \mathcal{A} \longrightarrow \mathcal{Y}
\]
between the set $\mathcal{A}$ of isomorphism classes of monomial algebras 
and the set $\mathcal{Y}$ of equivalence classes of pairs consisting of a symmetric $f_s$-BCA with a free f-degree function and an admissible cut, as defined above. 
For $A \in \mathcal{A}$, the map is given by
\[
\phi(A) = (T(A), D),
\]
where $D = \{ \alpha_m \mid \text{$m$ is a maximal path in $A$} \}$. 
Conversely, for a pair $(\Lambda, D)$, we have
\[
\phi^{-1}((\Lambda, D)) = A,
\]
where $A$ denotes the isomorphism class of the cut algebra associated with the admissible cut $D$.
	\end{corollary}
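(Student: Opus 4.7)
The plan is to verify that the two previous theorems of this section assemble into mutually inverse maps between $\mathcal{A}$ and $\mathcal{Y}$, after first checking that both assignments descend to the respective equivalence classes.

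First I would define the forward map at the level of representatives by $A \mapsto (T(A), D_A)$ with $D_A = \{\alpha_m \mid m \in \mathcal{M}\}$, and check this descends to $\mathcal{A}$. Given an isomorphism $\eta : A \to A'$ of monomial algebras, $\eta$ carries the set of maximal paths of $A$ bijectively to that of $A'$. The universal construction of the trivial extension yields an induced $k$-algebra isomorphism $T(\eta) : T(A) \to T(A')$ sending $(a,f) \mapsto (\eta(a), f\circ \eta^{-1})$. Under the identifications $T(A) \cong A_{E_A}$ and $T(A') \cong A_{E_{A'}}$ from Theorem~\ref{Trivial extension of mono}, the arrows $\alpha_m$ correspond to the dual basis elements $(0, m^\vee)$ via Lemma \ref{quiver of AE/TA}, and these are sent by $T(\eta)$ to $(0, \eta(m)^\vee)$, i.e.\ to the corresponding new arrows in $Q_{T(A')}$. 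Hence $T(\eta)(D_A) = D_{A'}$, and $\phi$ is a well-defined map $\mathcal{A} \to \mathcal{Y}$.

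Next I would define the candidate inverse $\psi : \mathcal{Y} \to \mathcal{A}$ at the level of representatives by $(\Lambda, D) \mapsto kQ/\langle I_\Lambda \cap kQ\rangle$ where $Q$ is $Q_\Lambda$ with $D$ removed, and show it descends to $\mathcal{Y}$. For an equivalence $f : \Lambda \to \Lambda'$ with $f(D) = D'$, one lifts $f$ to an isomorphism of the quiver path algebras $kQ_\Lambda \to kQ_{\Lambda'}$ (possibly after composing with an inner automorphism so that $f$ is given by a quiver isomorphism on arrows, as is standard for basic algebras), which restricts to an isomorphism $kQ \to kQ'$ and sends $I_\Lambda \cap kQ$ onto $I_{\Lambda'} \cap kQ'$; hence the cut algebras are isomorphic as monomial algebras.

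Now I would check $\psi \circ \phi = \mathrm{id}_{\mathcal{A}}$ and $\phi \circ \psi = \mathrm{id}_{\mathcal{Y}}$. The identity $\psi \circ \phi = \mathrm{id}_{\mathcal{A}}$ is precisely the content of the second theorem of this section: starting with $A = kQ_A/I_A$, the quiver $Q_{T(A)}$ minus $D_A$ recovers $Q_A$, and intersecting $I_{T(A)}$ with $kQ_A$ recovers $I_A$, because all the type 1 relations in $I_{T(A)}$ (coming from the socle of the trivial extension) necessarily involve an arrow in $D_A$, while the type 2 relations that survive in $kQ_A$ are exactly the paths in $I_A$. For $\phi \circ \psi = \mathrm{id}_{\mathcal{Y}}$ one uses the third theorem: starting with $(\Lambda, D)$ and forming $A = \psi(\Lambda, D)$, the theorem gives $T(A) \cong \Lambda$, and the argument there identifies each $\alpha \in D$ with an arrow of the form $\alpha_{qp}$ for the maximal path $qp$ in $A$ produced by rotating a special cycle $p\alpha q$. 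Thus the distinguished set of new arrows $D_A$ in $T(A)$ corresponds exactly to $D$, so the pairs are equivalent in $\mathcal{Y}$.

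The main obstacle is the well-definedness of $\psi$ on equivalence classes, specifically verifying that an arbitrary $k$-algebra isomorphism $f : \Lambda \to \Lambda'$ sending $D$ to $D'$ can be arranged so that it is induced by a quiver isomorphism carrying $Q_\Lambda \setminus D$ to $Q_{\Lambda'} \setminus D'$; this requires the standard reduction, for basic algebras, of algebra isomorphisms to isomorphisms of quivers with admissible ideals, together with the observation that $D$ and $D'$ consist of specific arrows distinguished by the admissible cut condition. Once this is granted, the remaining verifications reduce to invoking the two preceding theorems and the bijection between quiver data and the pair $(Q_\Lambda, D)$.
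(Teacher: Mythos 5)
Your proposal is correct and follows essentially the same route as the paper: the paper's proof consists of the remark that cut algebras of equivalent pairs are isomorphic together with the phrase ``combining previous two theorems,'' which is exactly your verification that the two assignments descend to equivalence classes and that the two preceding theorems supply $\psi\circ\phi=\mathrm{id}$ and $\phi\circ\psi=\mathrm{id}$. You are in fact more explicit than the paper about the well-definedness issues (in particular that an algebra isomorphism must be reduced to one induced by a quiver isomorphism respecting the cut), which the paper dismisses as obvious.
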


	We conclude this section with the following example.

	\begin{example}\label{ex:skew-BGA} (Example \ref{ex-alg:1-LBGA} revisited)
		Let $\Lambda=Q_\Lambda/I_\Lambda$ be the $f_s$-BCA in Example \ref{ex-alg:1-LBGA}, where $Q_\Lambda$ is given by 
		\[\begin{tikzcd}
	& {1} \\
	{2} && {3} \\
	& {4}
	\arrow["{\alpha_1}"', from=1-2, to=2-1]
	\arrow["{\beta_1}", from=1-2, to=2-3]
	\arrow["{\alpha_2}"', from=2-1, to=3-2]
	\arrow["{\beta_2}", from=2-3, to=3-2]
	\arrow["{\gamma}"{description}, from=3-2, to=1-2]
\end{tikzcd}\]
and $I_B=\langle \alpha_2\alpha_{1}-\beta_2\beta_{1}, \beta_{1}\gamma\alpha_2, \alpha_{1}\gamma\beta_2, \gamma\alpha_2\alpha_{1}\gamma\rangle$.

Consider $(\Lambda,\{\alpha_2,\beta_1\})\in\mathcal{Y}$. Then the cut algebra is given by $A_1=kQ_1/I_1$, where $Q_1$ is given by 
\[\begin{tikzcd}
	& {1} \\
	{2} && {3} \\
	& {4}
	\arrow["{\alpha_1}"', from=1-2, to=2-1]
	\arrow["{\beta_2}", from=2-3, to=3-2]
	\arrow["{\gamma}"{description}, from=3-2, to=1-2]
\end{tikzcd}\]
and $I_1=\langle\alpha_1\gamma\beta_2\rangle$. In fact, $\Lambda\cong T(A_1)$.

Consider $(\Lambda,\{\alpha_2,\beta_2\})\in\mathcal{Y}$. Then the cut algebra is given by $A_2=kQ_2$, where $Q_2$ is given by 
\[\begin{tikzcd}
	& {1} \\
	{2} && {3} \\
	& {4}
	\arrow["{\alpha_1}"', from=1-2, to=2-1]
	\arrow["{\beta_1}", from=1-2, to=2-3]
	\arrow["{\gamma}"{description}, from=3-2, to=1-2]
\end{tikzcd}\]
In fact, $\Lambda \cong T(A_2)$. Since $A_2$ is the path algebra of type $D_4$, it is also a skew-gentle algebra (see for example in \cite{GdlP}). Hence, $\Lambda$ is actually a skew-Brauer graph algebra in the sense of \cite{EGV, Soto}.

	\end{example}
	
	\section*{Declarations}
	
	\subsection*{Ethical Approval}
	This declaration is not applicable.
	\subsection*{Funding}
	This research is supported by NSFC (No.12031014)  and China Scholarship Council (No. 202506040127).
	\subsection*{Availability of data and materials}
	The datasets generated during the current study are available from the corresponding author on reasonable request.
	\subsection*{Acknowledgments}
		We would like to thank the referee for careful reading and helpful suggestions to improve our presentations.
	
	{}

\begin{thebibliography}{88}

		\bibitem{AZ} T. Aoki, Y. Zhang, Mutation of Brauer configuration algebras. arXiv preprint arXiv:2403.14134, 2024.

		
		\bibitem{Bar} M.J. Bardzell, The alternating syzygy behavior of monomial algebras. J. Algebra \textbf{188}(1) (1997), 69-89.

		\bibitem{EGV} A.G. Elsener, V. Guazzelli, T. Valdivieso, Skew-Brauer graph algebras. arXiv preprint arXiv:2410.01942, 2024.
    
   \bibitem{EF} E.A. Fern\'{a}ndez, Extensiones triviales y \'{a}lgebras inclinadas iteradas. PhD thesis, Universidad Nacional del Sur, Argentina, 1999, \url{http://inmabb.criba.edu.ar/tesis/1999\%20Fernandez-Extensiones\%20triviales\%20y\%20algebras\%20inclinadas.pdf}.
		
		\bibitem{FP} E.A. Fern\'{a}ndez, M.I. Platzeck, Presentations of trivial extensions of finite dimensional algebras and a theorem of Sheila Brenner. J. Algebra \textbf{249}(2) (2002), 326-344.
		
		\bibitem{FP2} E.A. Fern\'{a}ndez, M.I. Platzeck, Isomorphic trivial extensions of finite dimensional algebras. J. Pure Appl. Algebra \textbf{204}(1) (2006), 9-20.
		
		\bibitem{FSTTV} E.A. Fern\'{a}ndez, S. Schroll, H. Treffinger, S. Trepode, Y. Valdivieso, Characterisations of trivial extensions. arXiv preprint arXiv:2206.04581, 2022.

		\bibitem{GdlP} C. Gei{\ss}, J.A. de la Pe{\~n}a, Auslander-Reiten components for clans. Bol. Soc. Mat. Mexicana \textbf{5}(2) (1999), 307-326.

		\bibitem{Green} E.L. Green. Noncommutative Gr{\"o}bner bases, and projective resolutions. In: {\it Computational Methods for Representations of Groups and Algebras: Euroconference in Essen (Germany)}, pp.29-60, Basel: Birkh\"{a}user Basel, 1999.
		
		\bibitem{GKK} E.L. Green, E. Kirkman, J. Kuzmanovich, Finitistic dimension of finite dimensional monomial algebras, J. Algebra \textbf{136} (1991) 37–51.

\bibitem{GS2} E.L. Green, S. Schroll, Brauer configuration algebras: a generalization of Brauer graph algebras. Bull. Sci. Math. \textbf{141} (2017), 539-572.
		
		\bibitem{GS}  E.L. Green, S. Schroll, Almost gentle algebras and their trivial extensions. P. Edinburgh Math. Soc. \textbf{62}(2) (2019), 489–504.

		\bibitem{HW} D. Hughes, J. Waschb\"{u}sch, Trivial extensions of tilted algebras. Proc. Lond. Math. Soc. \text{3}(2) (1983), 347-364.
		
		\bibitem{Le} H. Lenzing, Weighted projective lines and applications. In: {\it Representations of Algebras and Related Topics}, pp. 153-188, European Mathematical Society, 2011.
		
		
		\bibitem{LL} N. Li, Y. Liu, Fractional Brauer configuration algebras  \uppercase\expandafter{\romannumeral1}: definitions and examples. J. Algebra \textbf{692} (2026), 336-378.

		\bibitem{LLX} N. Li, Y. Liu, B. Xing, Some examples of fractional Brauer configuration algebras, in preparation.

		\bibitem{LX} Y. Liu, B. Xing, Generalized parallel paths method for computing the first Hochschild cohomology group with applications to Brauer graph algebras. J. Algebra Appl., advance online publication: \url{https://doi.org/10.1142/S021949882650297X} (2025).
		
		\bibitem{PS} Z. Pogorzaly, A. Skowro\'{n}ski, Self-injective biserial standard algebras, J. Algebra \textbf{138}(2) (1991), 491-504.
		
		\bibitem{Ric1} J. Rickard, Derived eategories and stable equivalence. J. Pure Appl. Algebra \textbf{61} (1989), 303-317.

\bibitem{Ric} J. Rickard, Equivalences of derived categories for symmetric algebras. J. Algebra \textbf{257} (2002), 460-481.


\bibitem{Rin} C.M. Ringle, The repetitive algebra of a gentle algebra, Bol. Soc. Mat. Mexicana \textbf{3}(3), 235-253.
		
		\bibitem{RS} R. Schiffler, {\it Quiver representations}, CMS Books in Mathematics, Springer-Verlag, 2014.
		
		\bibitem{Sch} S. Schroll, Trivial extensions of gentle algebras and Brauer graph algebras. J. algebra \textbf{444} (2015), 183-200.

		\bibitem{JS} J. Schr\"{o}er, On the quiver with relations of a repetitive algebra. Arch. Math. (Basel) \textbf{72} (1999), 426-432.

		\bibitem{Soto} V. Soto, Tilting mutations as generalized Kauer moves for (skew) Brauer graph algebras with multiplicity. arXiv preprint arXiv:2406.10634, 2024.

        \bibitem{TW} H. Tachikawa, T. Wakamatsu, Tilting functors and stable equivalences for selfinjective algebras. J. Algebra \textbf{109} (1987), 138-165.

		\bibitem{X} B. Xing, Quasi-biserial algebras, special quasi-biserial algebras and symmetric fractional Brauer graph algebras. arXiv preprint arXiv:2408.03778, 2024.
	\end{thebibliography}
\end{document}